\newcommand{\R}{\mathbb{R}}
\newtheorem{theorem}{Theorem}[section]
\newtheorem{proposition}{Proposition}[section]
\newtheorem{lemma}{Lemma}[section]
\newtheorem{corollary}{Corollary}[section]
\newtheorem{definition}{Definition}[section]
\newtheorem{remark}{Remark}[section]
\newtheorem{example}{Example}[section]
\newcommand{\p}{\partial}
\newcommand{\bb}{\begin{equation}}
\newcommand{\ee}{\end{equation}}
\newcommand{\ba}{\begin{array}}
\newcommand{\ea}{\end{array}}
\newcommand{\f}{\frac}
\newcommand{\ds}{\displaystyle}
\newcommand{\al}{\alpha}
\newcommand{\be}{\beta}
\newcommand{\sign}{\text{sgn}\,}
\newcommand{\e}{\mathbb{E}}
\newcommand{\N}{{\mathbb N}}
\newcommand{\M}{{\cal M}}
\newcommand{\Gr}{\text{Gr}}
\newcommand{\supp}{\text{supp}}
\newcommand{\letra}{ \renewcommand{\labelenumi}{\alph{enumi})}}
\numberwithin{equation}{section}
\definecolor{TextColor}{rgb}{0.75, 0.75, 0.75}
\title{Intrinsic geometry and wave-breaking phenomena in solutions of the Camassa-Holm equation}
\author[1] {Igor Leite Freire}
\affil[1]{Departamento de Matemática, Universidade Federal de São Carlos\\
Rodovia Washington Luís, Km 235, 13565-905\\
São Carlos-SP, Brasil\\
  \texttt{igor.freire@ufscar.br} \\
  \texttt{igor.leite.freire@gmail.com}}
\begin{document}
\maketitle
\begin{abstract}
Pseudospherical surfaces determined by Cauchy problems involving the Camassa-Holm equation are considered herein. We study how global solutions influence the corresponding surface, as well as we investigate two sorts of singularities of the metric: the first one is just when the co-frame of dual form is not linearly independent. The second sort of singularity is that arising from solutions blowing up. In particular, it is shown that the metric blows up if and only if the solution breaks in finite time.
\end{abstract}

{\bf MSC classification 2020:} 35A01, 58J60, 37K40, 35Q51.

\keywords{Equations describing pseudospherical surfaces \and Geometric analysis \and Existence of metrics \and Blow up of metrics}
\newpage
\tableofcontents
\newpage

\section{Introduction}\label{sec1}

The Camassa-Holm (CH) equation
\bb\label{1.0.1}
u_t-u_{txx}+3uu_x=2u_xu_{xx}+uu_{xxx},
\ee
originally derived as a model for shallow water wave dynamics, has since become a cornerstone in the study of non-linear phenomena, integrable systems, and  analysis of PDEs. One of its remarkable features is its connection to pseudospherical surfaces (PSS), enabling a deep interplay between differential equations, geometry, and physical applications.

Despite extensive studies on the CH equation's integrability and wave-breaking phenomena, its geometric consequences have been barely explored. In a recent paper \cite{freire-ch} the author studied the geometry of PSS determined by Cauchy problems involving the CH equation.

The start point for \cite{freire-ch} is Reyes' work \cite{reyes2002}, where the geometric integrability of the CH equation was first established. The results reported in \cite{freire-ch} can be summarised as follows:
\begin{itemize}
    \item the non-local form of the CH equation, or its integral formulation, is geometrically integrable;
    \item any non-trivial initial datum determines a metric for a PSS;
    \item solutions $u(x,t)$ emanating from odd initial data satisfying $m_0(x)=u(x,0)-u_{xx}(x,0)\leq 0$, for $x\geq 0$, define a first fundamental blowing up within a finite region.
\end{itemize}

Although \cite{freire-ch} shed some light on qualitative aspects of surfaces determined by the CH equation, many other questions remained open, such as:
\begin{itemize}
    \item It was show that local solutions may define abstract surfaces provided that the metric is defined on subsets of strips determined by the initial datum. What would happen when global solutions are considered?
    \item In \cite{freire-ch} was considered a specific scenario for which the solution breaks at finite time. From a geometric perspective, that wave-breaking solution leads to a  metric tensor that becomes unbounded within a finite region. What might be said about metrics defined by other wave-breaking solutions? Does any solution breaking in finite time lead to a metric blowing up?
    \item Some qualitative results in the literature of the CH equation enable us to describe qualitatively the solution. Can we qualitatively describe the corresponding metric for these solutions? If yes, what can be said? For a negative answer, why not?
\end{itemize}

This paper is concerned to answer the questions above.  The results reported here contribute to the broader understanding of non-linear systems, where the interplay between geometry and physical phenomena described by the model offers new perspectives from the point of view of geometric analysis, including wave propagation and the onset of singularities in integrable models. More concretely, our contribution are:
\begin{itemize}
    \item Establishment of direct correspondence between wave-breaking phenomena and metric blow-up in the CH equation. Our main result concerning this topic is that any solution developing wave-breaking leads to a blowing up metric, see Theorem \ref{teo6.1}.
    \item Extension of previous of local to global nature, exploring their geometric implications.
    \item Discussion of examples highlighting the practical significance of the findings.
\end{itemize}

The outline of the paper is as follows: In section \ref{sec2} we revisit some basic and relevant aspects of the CH equation, with main focus on the two-dimensional Riemannian geometry determined by its solutions and open problems regarding its geometric analysis. Next, in section \ref{sec3} we fix the notation used throughout the manuscript, recall basic notions and state our main results. In section \ref{sec4} we recall qualitative results regarding the CH equation, that are widely employed in section \ref{sec5}, where our main results are proved. In section \ref{sec6} we show that the metric of the surface blows up if and only if the solution breaks in finite time. Some examples illustrating our main results are discussed in section \ref{sec7}, while our discussions and conclusions are presented in sections \ref{sec8} and \ref{sec9}, respectively.

\section{Geometric aspects of the CH equation}\label{sec2}

Despite being primarily deduced as an approximation for the description of waves propagating in shallow water regimes, the CH equation proved to have several interesting properties related to integrability \cite{chprl}. If we denote $$m(x,t):=u(x,t)-u_{xx}(x,t),$$
which is known as momentum \cite{chprl}, then \eqref{1.0.1} can be rewritten as an evolution equation for $m$, namely,
\bb\label{2.0.1}
m_t+2u_xm+um_x=0.
\ee

Equation \eqref{2.0.1} has a bi-Hamiltonian structure \cite{chprl}. In particular, the functional
\bb\label{2.0.2}
{\cal H}_1=\f{1}{2}\int_\R (u^2+u_x^ 2)dx,
\ee
plays vital importance not only because it is a Hamiltonian, but also because it is an invariant for zero background solutions of the CH equation.

As a consequence of its bi-Hamiltonian structure, \eqref{2.0.1} has also a recursion operator and infinitely many symmetries as well, being also integrable in this sense. The reader is referred to \cite[Chapter 7]{olverbook} or \cite{olverjmp} for further details about recursion operators and integrability. 

It is still worth of mention that Camassa and Holm showed a Lax formulation \cite{chprl} for \eqref{1.0.1}
\bb\label{2.0.3}
\psi_{xx}=\ds{\Big(\f{1}{4}-\f{m}{2\lambda}\Big)\psi},\,\,
\,\,
\psi_t=\ds{-(\lambda+u)\psi_x+\f{1}{2}u_x\psi}
\ee
as well as continuous, piecewise soliton like solutions, called peakons. For a review on the CH and related equations, see \cite{freire-cm}.

The CH equation, or its solutions, can also be studied from geometric perspectives \cite{const2000-1,const2002-jpa,const-jnl,reyes2002}. We shall briefly discuss \cite{const2000-1,reyes2002} which are the main inspirations for this paper, the first being concerned with infinite dimensional Riemmanian geometry, whereas the latter is concerned with an abstract two-dimensional Riemannian manifold, whose importance for this paper is crucial.

Equation \eqref{1.0.1} can be associated with the geometric flow in an infinite dimensional manifold ${\cal D}^3(\R)$ modelled by a Hilbert space in which we can endow a (weak) Riemannian metric \cite{const2000-1}. The geodesics in ${\cal D}^3(\R)$ can either exist globally \cite[Theorem 6.1]{const2000-1} or breakdown in finite time \cite[Theorems 6.3 and 6.4]{const2000-1} and, in particular, geodesics starting, at the identity, with initial velocity corresponding to initial datum leading to breaking solutions will also develop singularities at finite time \cite[Theorem 6.3]{const2000-1}.

A different geometric perspective for the CH equation was given by Reyes \cite{reyes2002}, who showed it describes pseudospherical surfaces \cite[Theorem 1]{reyes2002} {\it à la} Chern and Tenenblat \cite{chern}, e.g. see \cite[Definition 2.1]{keti2015}. 

\begin{definition}\label{def2.1}
    A pseudospherical surface $(PSS)$ is a two-dimensional Riemannian manifold whose Gaussian curvature is constant and negative. 
\end{definition}

For now it suffices saying that an equation describes pseudospherical surfaces, or is of the pseudospherical type, henceforth referred as PSS equation, when the equation is the compatibility condition of the structure equations
\bb\label{2.2.1}
d\omega_1=\omega_3\wedge\omega_2,\quad d\omega_2=\omega_1\wedge\omega_3,\quad d\omega_3=-{\cal K}\omega_1\wedge\omega_2,
\ee
for a PSS.

In his work Reyes showed that if $u$ is a solution of the CH equation, $m$ is its corresponding momentum, then the one-forms
\bb\label{2.2.2}
\ba{lcl}
\omega_1&=&\ds{\Big(\f{\lambda}{2}+\f{1}{2\lambda}-m\Big)dx+\Big(um+\f{\lambda}{2}u-\f{u}{2\lambda}-\f{1}{2}-\f{\lambda^2}{2}\Big)dt},\\
\\
\omega_2&=&-u_xdt,\\
\\
\omega_3&=&\ds{\Big(m+\f{1}{2\lambda}-\f{\lambda}{2}\Big)dx+\Big(\f{\lambda^2}{2}-\f{1}{2}-\f{u}{2\lambda}-\f{\lambda}{2}u-um\Big)dt},
\ea
\ee
satisfy \eqref{2.2.1}, for any $\lambda\in\R\setminus\{0\}$ and ${\cal K}=-1$. This implies that the domain of the solution $u$, under certain circumstances, can be endowed with a Riemannian metric $g=\omega_1^2+\omega_2^2$ of a PSS, also known as first fundamental form of the surface. From \eqref{2.2.2}, the corresponding metric is
\bb\label{2.2.3}
\ba{lcl}
g&=&\ds{\Big(\f{\lambda}{2}+\f{1}{2\lambda}-m\Big)^2dx^2}+\ds{2\Big(\f{\lambda}{2}+\f{1}{2\lambda}-m\Big)\Big(um+\f{\lambda}{2}u-\f{u}{2\lambda}-\f{1}{2}-\f{\lambda^2}{2}\Big)dxdt}\\
\\
&+&\ds{\Big[u_x^2+\Big(um+\f{\lambda}{2}u-\f{u}{2\lambda}-\f{1}{2}-\f{\lambda^2}{2}\Big)^2}\Big]dt^2=:\ds{g_{11}dx^2+2g_{12}dxdt+g_{22}dt^2}.
\ea
\ee

More precisely, the work of Reyes showed that, in fact, the Camassa-Holm equation is geometrically integrable, in the sense that its solutions may describe a one-parameter family of non-trivial pseudospherical surfaces \cite[Corollary 1]{reyes2002}. This is a consequence of the fact that the parameter $\lambda$ in \eqref{2.2.2} cannot be removed under a gauge transformation\footnote{A PSS equation can be defined by more than one choice of forms \cite{sasaki}. Even for the CH equation, our triad \eqref{2.2.2} is obtained by a gauge transformation from the original forms discovered by Reyes \cite[Theorem 1]{reyes2002}, see \cite[Remark 6]{reyes2002}.}

A smooth solution of a PSS equation leads to smooth one-forms $\omega_1,\omega_2,\omega_3$ and then the corresponding first fundamental form will inherit the same regularity. The solutions considered by Constantin \cite{const2000-1}, in contrast, are not necessarily $C^\infty$, showing an enormous difference between \cite{const2000-1} and \cite{tarcisio,keti2015,reyes2000,reyes2002,reyes2006-sel,reyes2006-jde,reyes2011,sasaki} in terms of the regularity of the objects considered.
 
\section{Notation, notions and main results}\label{sec3}

Throughout this paper $u=u(x,t)$ denotes a function depending on the variables $x$ and $t$, whose physical meaning, when considering the model \eqref{1.0.1}, are height of the free surface of water above a flat bottom, space and time, respectively. From a geometric point of view, $x$ and $t$ are coordinates of a domain in $\R^2$ in which the function $u$ is defined. We denote by $u(x,\cdot)$ and $u(\cdot,t)$ the functions $t\mapsto u(x,t)$, for fixed $x$, and $x\mapsto u(x,t)$, for fixed $t$, respectively.

For given two non-empty and connected subsets $I,J\subseteq\R$, the notation $u\in C^0(I\times J)$ means that $u=u(x,t)$ is continuous with respect to both variables in $I\times J$. By $u_x$ or $\p_x u$ we denote partial derivative of $u$ with respect to its first argument, while similarly $u_t$ or $\p_t u$ will denote partial derivative with respect to the second argument. We can also consider higher order derivatives using similar convention.

The set of ordered $n-th$ derivatives of $u$, $n\in\N$, is denoted by $u_{(n)}$. By convention, $u_{(0)}=u$. Whenever $u$ and its all derivatives up to order $k\in\N\cup\{0\}$ are continuous on the domain of $u$, we then write $u\in C^k$. The sets of smooth functions defined on a domain $\Omega\subseteq\R^2$ is denoted by $C^\infty(\Omega)$. 

Given $n\in\N$, a non-empty set $I\subseteq\R$ and a Banach space $X$, we say that $u\in C^n(X,I)$ whenever $\p_x^ku(\cdot,t) \in C^0(X,I)$, $0\leq k\leq n$. Moreover, $u\in C^0(X,I)$ means $u(\cdot,t)\in X$ and $\|u\|_{C^0}=\sup_{t\in I}\|u(\cdot,t)\|_X$.

\subsection{Sobolev spaces}

We assume familiarity with Sobolev spaces and Fourier transform. We give a concise presentation in order not to increase the manuscript. The author less familiar with these concepts is guided to \cite{freire-ch}, where a short revision on these spaces is presented in a similar context.

We denote by $\langle \cdot,\cdot\rangle_s$ and $\|\cdot\|_s$, $s\in\R$, the inner product in $H^s(\R)$ and its induced norm, respectively, whereas by $\|\cdot\|_{L^p(\R)}$ we denote the norm in the $L^p(\R)$ space, for finite $p$, and $\|\cdot\|_\infty$ otherwise. In particular, ${\cal S}(\R)\subset H^s(\R)\subset H^t(\R) \subset {\cal S}'(\R)$, for any $s\geq t$.

The following is a cornerstone result for our developments.

\begin{lemma}\label{lema3.1}{\tt(Sobolev Embedding Theorem, \cite[Proposition 1.2, page 317]{taylor})}
If $s>1/2$, then each $u\in H^s(\R)$ is bounded and continuous. In addition, if $s>1/2+k$, $k\in\N$, then $H^s(\R)\subseteq C^k(\R)\cap L^\infty(\R)$.
\end{lemma}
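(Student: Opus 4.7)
The plan is to reduce the whole statement to a Fourier--theoretic estimate: show that if $s>1/2$ then $\hat{u}\in L^1(\R)$ for every $u\in H^s(\R)$, and then read off the regularity of $u$ from the Fourier inversion formula. First I would write, for $u\in H^s(\R)$,
$$\int_\R |\hat{u}(\xi)|\,d\xi=\int_\R (1+\xi^2)^{-s/2}\,(1+\xi^2)^{s/2}|\hat{u}(\xi)|\,d\xi,$$
and apply Cauchy--Schwarz to the two factors to obtain
$$\|\hat{u}\|_{L^1(\R)}\leq \left(\int_\R \f{d\xi}{(1+\xi^2)^{s}}\right)^{1/2}\|u\|_s.$$
The integral on the right is finite precisely when $2s>1$, i.e.\ when $s>1/2$. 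This is the only place where the threshold $1/2$ enters.

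Once $\hat{u}\in L^1(\R)$, the Fourier inversion formula recalled in the paper gives
$$u(x)=\f{1}{\sqrt{2\pi}}\int_\R \hat{u}(\xi)\,e^{ix\xi}\,d\xi,$$
which immediately yields the pointwise bound $\|u\|_\infty\leq (2\pi)^{-1/2}\|\hat{u}\|_{L^1(\R)}$, and continuity follows from the dominated convergence theorem applied to the translations $x\mapsto x+h$. This takes care of the case $s>1/2$.

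For the higher--regularity statement, assume $s>1/2+k$ with $k\in\N$. For each integer $0\leq j\leq k$ the Fourier transform of $\p_x^{j}u$ is $(i\xi)^{j}\hat{u}(\xi)$, and the same Cauchy--Schwarz manoeuvre gives
$$\int_\R |\xi|^{j}|\hat{u}(\xi)|\,d\xi \leq \left(\int_\R \f{\xi^{2j}}{(1+\xi^{2})^{s}}\,d\xi\right)^{1/2}\|u\|_s.$$
The integral on the right converges if and only if $2s-2j>1$, which holds for every $j\leq k$ because $s>k+1/2$. Hence $(i\xi)^{j}\hat{u}\in L^1(\R)$ for $0\leq j\leq k$, so the Fourier inversion argument of the previous paragraph applied to each $\p_x^{j}u$ shows that all derivatives up to order $k$ are bounded and continuous, giving $u\in C^{k}(\R)\cap L^\infty(\R)$.

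I do not anticipate any serious obstacle: this is a textbook Plancherel/Cauchy--Schwarz computation, and the only mildly delicate point is the passage from $\hat{u}\in L^{1}(\R)$ to continuity of $u$, which is routine from dominated convergence. The essential content of the proof is packed into the convergence of $\int (1+\xi^{2})^{-s}\,d\xi$ at the threshold $s=1/2$ (respectively $s=k+1/2$ after inserting the factor $\xi^{2j}$).
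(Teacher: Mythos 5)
The paper does not actually prove this lemma: it is imported verbatim from Taylor's textbook (Proposition 1.2, p.\ 317), so there is no internal argument to compare against. Your proof is the standard one for the one-dimensional Sobolev embedding --- and essentially the argument found in Taylor --- and it is correct: Cauchy--Schwarz against the weight $(1+\xi^2)^{-s/2}$ shows $\xi^{j}\hat{u}\in L^1(\R)$ for $0\leq j\leq k$ exactly when $s>k+1/2$, and Fourier inversion together with dominated convergence yields boundedness and continuity of the corresponding derivatives, with the explicit bound $\|u\|_\infty\leq(2\pi)^{-1/2}\|\hat{u}\|_{L^1(\R)}$. Two routine points deserve one sentence each in a polished write-up: first, the inversion formula identifies $u$ with a continuous function only almost everywhere, so the precise conclusion is that $u$ admits a bounded $C^k$ representative; second, to pass from $(i\xi)^{j}\hat{u}\in L^1(\R)$ to \emph{classical} differentiability of $u$ (rather than just continuity of the distributional derivative), one should differentiate under the integral sign in the inversion formula, which is justified by dominated convergence precisely because $\xi^{j}\hat{u}\in L^1(\R)$ for every $j\leq k$. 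Neither issue affects the correctness of your argument.
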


As we will soon see, the natural Sobolev space for our purposes is precisely $H^4(\R)$, which, in view of the precedent result, is embedded into $ C^3(\R)\cap L^\infty(\R)$.

Formally, if $m_0(x)=\Lambda^2(u_0)=u_0(x)-u_0''(x)$ then
$$
u_0(x)=(\Lambda^{-2}m_0)(y)=\f{1}{2}\int_\R e^{-|x-y|}m_0(y)dy.
$$

Another frequent operator seen in this paper is 
\bb\label{3.1.4}
\p_x\Lambda^{-2}=(\p_xg)(x)=-\f{\sign(x)}{2}e^{-|x|},
\ee
that acts on $f$ through the formula $(\p_x\Lambda^{-2}(f))(x)=-\f{1}{2}(\sign(\cdot)e^{-|\cdot|}\ast f(\cdot))(x)$.

\subsection{Intrinsic geometry and PSS}

Let $\e$ be the usual three-dimensional euclidean space, with canonical inner product $\langle\cdot,\cdot\rangle$ and $\M\subseteq\e$ be an open, non-empty set, which we shall henceforth identify with a surface. A one-form $\omega=f(x,t)dx+g(x,t)dt$ defined on $\M$ is said to be of class $C^k$ if and only if its coefficients $f$ and $g$ are $C^k$ functions. 

We say that a triad of $C^k$ one forms $\{\omega_1,\omega_2,\omega_3\}$ endows $\M$ with a PSS structure with Gaussian curvature ${\cal K}=-1$, if $\{\omega_1,\omega_2\}$ is linearly independent, that is expressed through the condition $\omega_1\wedge\omega_2\big|_\M\neq0$, and the following equations
\bb\label{3.2.1}
d\omega_1=\omega_3\wedge\omega_2,\quad d\omega_2=\omega_1\wedge\omega_3,\quad d\omega_3=\omega_1\wedge\omega_2
\ee
are satisfied.

The form $\omega_3$ is called {\it Levi-Civita connection} and it is completely determined by the other two one-forms \cite[Lemma 5.1, page 289]{neil}, as well as the Gaussian curvature of $\M$ \cite[Theorem 2.1, page 329]{neil}. Since the forms $\omega_1,\omega_2$, for each point $p\in\M$, are dual elements of the basis of the corresponding tangent space, then they are intrinsic objects associated to the surface, as well as any other geometry object described only by them.

\begin{definition}\label{def3.1}
Let $\omega_1$ and $\omega_2$ be given one-forms on a surface $\M$ in $\e$, such that $\{\omega_1,\omega_2\}$ is LI, and $p\in\M$. The first fundamental form of $\M$ is defined, on each each tangent space $T_p\M$ and for any $v\in T_p\M$, by $I(v)=\omega_1(v)^2+\omega_2(v)^2$.
\end{definition}

Using the convention $\al\be=\al\otimes\be$ and $\al^2=\al\al$, for any one-forms $\al$ and $\be$, we can rewrite the first fundamental form as
\bb\label{3.2.7}
I=\omega_1^2+\omega_2^2.
\ee 

\subsection{Main results}\label{sub2.3}

Let us consider the CH equation \eqref{1.0.1} and
\bb\label{3.3.1}
{\cal E}[u]:=u_t-u_{txx}+3uu_x-2u_xu_{xx}+uu_{xxx},\quad \overline{\cal E}[u]:=u_t+uu_x+\p_x\Lambda^{-2}\Big(u^2+\f{u_x^2}{2}\Big).
\ee

While ${\cal E}[u]$ is a well defined quantity for smooth functions $u=u(x,t)$,
the same cannot be said for $\overline{{\cal E}}[u]$. Its expression in \eqref{3.3.1} has to be seen at a formal level, in the sense it may be meaningless depending on where $u$ belogs to. However, if we restrict ourselves to functions $u\in C^0(H^4(\R),[0,T))\cap C^1(H^3(\R),[0,T))$, then we have the identities
$$
{\cal E}[u]=\Lambda^{2}(\overline{{\cal E}}[u]),
$$
that, in other words, reads to
\bb\label{3.3.2}
u_t-u_{txx}+3uu_x-2u_xu_{xx}+uu_{xxx}=(1-\p_x^2)\Big(u_t+uu_x+\p_x\Lambda^{-2}\Big(u^2+\f{u_x^2}{2}\Big)\Big).
\ee

Suppose that $u$ is a solution of the CH equation \eqref{1.0.1}. Then $u$ is a solution of the non-local (first order) evolution equation
\bb\label{3.3.3}
u_t+uu_x+\p_x\Lambda^{-2}\Big(u^2+\f{u_x^2}{2}\Big)=0.
\ee

Conversely, assuming that $u\in C^0(H^4(\R),[0,T))\cap C^1(H^3(\R),[0,T))$ is a solution of \eqref{3.3.3}, then \eqref{3.3.2} tells us that $u$ is a solution of \eqref{1.0.1}.

The above examples show that a solution for \eqref{3.3.3} is not necessarily a solution of the \eqref{1.0.1}, although they agree for solutions belonging to $H^s(\R)$ for $s$ sufficiently large. 

\begin{remark}
    The solutions of \eqref{3.3.3} and \eqref{1.0.1} also agree in other function space, such as Besov spaces, see \cite{dan}. One of the reasons of the present work considers only Sobolev space is because it can be seen as a dual work for finite dimensional manifolds of the results in \cite{const2000-1}.
\end{remark}

The observations made above are well known facts in the literature of the CH equation, but in view of their importance in the development of this manuscript, we want to give them the needed attention.

\begin{proposition}\label{prop3.1}
    Let $u\in C^0(H^4(\R),[0,T))\cap C^1(H^3(\R),[0,T))$. Then $u$ is a classical solution of the CH equation \eqref{1.0.1} if and only if $u$ is a classical solution of the non-local equation \eqref{3.3.3}. Moreover, in such a class, the Cauchy problem
    \bb\label{3.3.4}
\left\{
\ba{l}
m_t+2u_xm+um_x=0,\\
\\
u(x,0)=u_0(x)
\ea
\right.
\ee
is equivalent to
\bb\label{3.3.5}
\left\{
\ba{l}
\ds{u_t+uu_x+\p_x\Lambda^{-2}\Big(u^2+\f{u_x^2}{2}\Big)=0},\\
\\
u(x,0)=u_0(x).
\ea
\right.
\ee
\end{proposition}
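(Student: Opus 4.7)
The plan is to exploit the algebraic identity \eqref{3.3.2} already recorded in the excerpt, which displays the left-hand side of the CH equation as $\Lambda^{2}$ applied to the left-hand side of the non-local form. Once this identity is available in the stated regularity class, both equivalences reduce to two ingredients: (i) the injectivity of $\Lambda^{2}$ on a suitable Sobolev space, and (ii) a direct algebraic manipulation relating the momentum equation to \eqref{1.0.1}.

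For the first equivalence, I would introduce
$$F(x,t) := u_t + uu_x + \partial_x\Lambda^{-2}\Big(u^2 + \frac{u_x^2}{2}\Big)$$
and first track its regularity at each fixed $t \in [0,T)$. Under the hypothesis, $u_t(\cdot,t) \in H^3(\R)$ and $u(\cdot,t) \in H^4(\R)$, so also $u_x(\cdot,t) \in H^3(\R)$; since $H^3(\R)$ is a Banach algebra (a standard consequence of Lemma \ref{lema3.1}), both $uu_x$ and $u^2 + u_x^2/2$ lie in $H^3(\R)$, and since $\partial_x\Lambda^{-2}$ is bounded $H^3(\R)\to H^4(\R)$ (its Fourier multiplier $-i\xi/(1+\xi^2)$ is bounded), one concludes $F(\cdot,t) \in H^3(\R)$. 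Identity \eqref{3.3.2} then reads: the left-hand side of \eqref{1.0.1}, evaluated on $u$, equals $\Lambda^{2} F$ pointwise. Because $\Lambda^{2}:H^3(\R)\to H^1(\R)$ is an isomorphism (its symbol $1+\xi^2$ never vanishes), vanishing of one side is equivalent to vanishing of the other, so \eqref{1.0.1} $\iff$ \eqref{3.3.3}.

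For the equivalence of the Cauchy problems, I would note that $m = u - u_{xx} = \Lambda^{2} u$, hence $m_t = u_t - u_{txx}$ and $m_x = u_x - u_{xxx}$, all continuous on $\R\times(0,T)$ thanks to the embedding $C^0(H^4(\R),[0,T)) \cap C^1(H^3(\R),[0,T)) \subseteq C^{3,1}(\R\times(0,T))$ recalled just above Proposition \ref{prop3.1}. A direct expansion
$$m_t + 2u_x m + u m_x = u_t - u_{txx} + 2u_x(u - u_{xx}) + u(u_x - u_{xxx})$$
and collection of like terms reproduces exactly the left-hand side of \eqref{1.0.1}. Hence the PDEs in \eqref{3.3.4} and in \eqref{1.0.1} are pointwise identical on the given class, and since the initial condition $u(x,0)=u_0(x)$ is common to both problems, combining with the first part delivers the equivalence of \eqref{3.3.4} and \eqref{3.3.5}.

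The only delicate point is the regularity bookkeeping needed to guarantee that $F$ lives in a space where $\Lambda^{2}$ is injective; once the algebra property of $H^3(\R)$ is invoked, this is automatic. The rest of the argument combines a Fourier-side injectivity statement with routine differentiation, so I expect no substantive obstacle.
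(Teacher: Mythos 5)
Your argument is correct and follows essentially the same route as the paper, which justifies Proposition \ref{prop3.1} via the identity \eqref{3.3.2} together with the invertibility of $1-\p_x^2=\Lambda^2$ and the expansion of $m_t+2u_xm+um_x$ into the left-hand side of \eqref{1.0.1}. Your additional bookkeeping (the Banach algebra property of $H^3(\R)$ and the boundedness of $\p_x\Lambda^{-2}$) only makes explicit what the paper leaves implicit in the discussion preceding the proposition.
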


In other words, proposition \ref{prop3.1} says that \eqref{1.0.1} and \eqref{3.3.3} are the same object in the class $C^0(H^4(\R),[0,T))\cap C^1(H^3(\R),[0,T))$.

The Cauchy problem \eqref{3.3.5} is more convenient to address the questions raised in the Introduction. In fact, in view of the tools developed by Kato \cite{kato}, we can establish the existence and uniqueness of a solution $u\in {\cal B}^s:=C^0(H^s(\R),[0,T))\cap C^{1}(H^{s-1}(\R),[0,T))$, $s>3/2$, for \eqref{3.3.5} emanating from an initial datum $u_0\in H^{s}(\R)$ \cite[Theorem 3.2]{blanco}. While any function in ${\cal B}^s$ is $C^1$ with respect to $t$, its regularity regarding $x$ is controlled by $s$. Therefore, taking $s$ sufficiently large we can reach to a higher regularity of the solution with respect to $x$, making it also a solution for \eqref{3.3.4}. See also \cite{freire-AML}.

It is time to drive back to PSS equations. As we have already pointed out, we must observe that several notions in this field were introduced, and have been used assuming, implicitly or explicitly, smooth solutions see \cite[Definition 2.4]{reyes2011}, \cite[page 89]{keti2015} \cite[page 89]{kamran}, and \cite[page 2]{tarcisio} and \cite[page 2]{kah}, respectively. On the other hand, our paper aims at seeing \eqref{3.3.3} as a PSS equation and thus, we need to look for notions that do not require $C^\infty$ regularity in the studied objects.

\begin{definition}{\tt($C^k$ PSS modelled by ${\cal B}$ and {\cal B}-PSS equation, \cite[Definition 2.1]{freire-ch})}\label{def3.4}
Let ${\cal B}$ be a function space, where their elements are $C^k$ functions. A differential equation \eqref{3.3.1}, for a dependent variable $u\in{\cal B}$, is said to describe a pseudospherical surface of class $C^k$ modelled by ${\cal B}$, $k\in\N$, or it is said to be of pseudospherical type modelled by ${\cal B}$, if it is a necessary and sufficient condition for the existence of functions $f_{ij}$, $1\leq i\leq 3,\,\,,1\leq j\leq 2$, depending on the solution $u$ of the equation and its derivatives, such that:
\begin{enumerate}\letra
    \item the functions $f_{ij}$ are $C^k$ with respect to their arguments;
    \item the forms 
    \bb\label{2.3.2}
\omega_i=f_{i1}dx+f_{i2}dt,\quad 1\leq i\leq 3,
\ee
satisfy the structure equations of a pseudospherical surface of Gaussian curvature ${\cal K}=-1$, that is,
\bb\label{2.3.3}
d\omega_1=\omega_3\wedge\omega_2,\quad d\omega_2=\omega_1\wedge\omega_3,\quad d\omega_3=\omega_1\wedge\omega_2;
\ee
    \item the condition $\omega_1\wedge\omega_2\not\equiv0$ is satisfied.
\end{enumerate}
\end{definition}

If the function space is clear from the context and no confusion is possible, we maintain the original terminology introduced in the works by Tenenblat and co-authors and simply say PSS equation in place of ${\cal B}-$PSS equation.

Whichever function space ${\cal B}$ is, the first condition asks it to be a subset of $C^k$, that is the space who utterly controls the regularity of the surface. 

\begin{remark}\label{rem3.1}It is possible to find books in differential geometry requiring $C^2$ metrics for a surface, which would force the one-forms being $C^2$ \cite[Theorem 4.24, page 153]{ku}. However, \cite[Theorems 10-19 and 10-19, page 232]{gug} and \cite[Theorem 10-18, page 232]{gug} require $C^1$ regularity of the one-forms defining a surface (and thus, a $C^1$ metric). It is worth noticing that this is the same regularity required by Hartman and Wintner \cite[page 760]{hartman}, who proved a sort of Bonnet theorem requiring $C^1$ metric of a surface defined on a domain in $\R^2$. 
\end{remark}

\begin{remark}\label{rem3.2}
    The second condition in definition \ref{def3.4} is satisfied if we are able to find functions $\mu_1$, $\mu_2$ and $\mu_3$, depending on $u$ and its derivatives up to a finite order, vanishing identically on the solutions of the equation, that is,
    $$
     d\omega_1-\omega_3\wedge\omega_2=\mu_1dx\wedge dt,\,\,d\omega_2-\omega_1\wedge\omega_3=\mu_2dx\wedge dt,\,\,d\omega_3-\omega_1\wedge\omega_2=\mu_3dx\wedge dt,
    $$
    and
    $$
    \mu_1\big|_{\eqref{3.3.1}}\equiv0,\,\,\,\mu_2\big|_{\eqref{3.3.1}}\equiv0\,\,\,\,\mu_3\big|_{\eqref{3.3.1}}\equiv0.
    $$
   
\end{remark}

\begin{remark}\label {rem3.3}
    In practical terms, the components of the functions $f_{ij}$, jointly with the conditions in Definition \ref{def3.4}, tells us the regularity we have to ask from the solution of the Cauchy problem in order to define a PSS. The final regularity that can be achieved is dictated by these coefficients and that required to grant the existence of solutions from the available tools for proving their well-posedness.
\end{remark}

\begin{remark}\label {rem3.4}
    The third condition is present for technical reasons, to avoid the situation $d\omega_3=0$, which would imply that $\omega_1=\al\omega_2$, for some $\al\in\R$. In practical aspects, this condition has to be verified case by case, depending on the solution. Despite being technical, this requirement truly ensures a surface structure in definition \ref{def3.4}.
\end{remark}

While definition \ref{def3.4} of ${\cal B}-$PSS equation has made only a minor modification in the previous one (that by Chern and Tenenblat), the same cannot be said about our proposed notion for a generic solution.

\begin{definition}{\tt(Generic solution, \cite[Definition 2.2]{freire-ch})}\label{def3.5}
A function $u:U\rightarrow\R$ is called {\it generic solution} for the ${\cal B}-$PSS equation ${\cal F}[v]=0$ if:
\begin{enumerate}\letra
\item $u\in{\cal B}$;
\item It is a solution of the equation. In other words, ${\cal F}[u]\equiv0$;
\item The one-forms \eqref{2.3.2} are $C^k$ on $U$;
\item There exists at least a simply connected open set $\Omega\subseteq U$ such that $\omega_1\wedge\omega_2(u(p))\neq0$, for each $p\in\Omega$.
\end{enumerate}

The condition $\omega_1\wedge\omega_2(u(p))\neq0$ has to be understood as follows: the one forms given in \eqref{2.3.2} usually depend on $(x,t,u,u_{(1)},\cdots,u_{(n)})$ for some $n$, where $u$ is a solution of the equation and $u_{(1)},\cdots,u_{(n)}$ denotes derivatives of $u$ up to order $n$. Let $p=(x,t)$ any point on the domain of the solution. By $\omega_1\wedge\omega_2(u(p))\neq0$ we mean
    $$\Big(f_{11}f_{22}-f_{12}f_{21}\Big)(p,u(p),u_{(1)}(p),\cdots,u_{(n)}(p))\neq0.$$

    Henceforth, any forms dealt with herein have to be understood in a similar sense. For a better discussion, see \cite[pages 77-78]{reyes2000}. 

A solution that is not generic is said to be {\it non-generic}.
\end{definition}

Let us show that the CH equation \eqref{1.0.1} is a $C^0(H^4(\R),[0,T))\cap C^{1}(H^{3}(\R),[0,T))-$PSS equation. 

\begin{example}\label{example3.4} 
We begin with the following observation: The minimum of regularity we can require to define a surface is $C^1$, see \cite[Theorems 10-19 and 10-19, page 232]{gug}. Therefore, the component functions of the one-forms \eqref{2.2.2} have to be of this order, which in particular, implies $m\in C^1$. As such, $u$ has to be at least $C^3$ with respect to $x$ and $C^1$ with respect to $t$, with continuous mixed derivatives. As a result, the CH equation is a PSS equation modelled by the function space  ${\cal B}:=C^{3,1}(U)$ and $u$ is a generic solution for the equation, bringing to $\Omega$ the structure of a PSS, in the following sense: $u$ is defined on $\Omega$ and  the pullback of the one-forms by $u$ and its derivatives evaluated on $\Omega$ satisfies the condition $\omega_1\wedge\omega_2\neq0$.

Let $\lambda\in\R\setminus\{0\}$ and consider the triad of one-forms \eqref{2.2.2}. A straightforward calculation shows that
\bb\label{3.3.7}
\ba{lcl}
d\omega_1-\omega_3\wedge\omega_2&=&\Big(m_t+2u_xm+um_x\Big)dx\wedge dt,\\
\\
d\omega_2-\omega_1\wedge\omega_3&=&0,\\
\\
d\omega_3-\omega_1\wedge\omega_2&=&-\Big(m_t+2u_xm+um_x\Big)dx\wedge dt,
\ea
\ee
and
\bb\label{3.3.8}
\omega_1\wedge\omega_2=-\Big(\f{\lambda}{2}+\f{1}{2\lambda}-m\Big)u_x dx\wedge dt.
\ee
Moreover, if $u$ is a solution of the CH equation, we conclude that  $\omega_1\wedge\omega_2=0$ if and only if 
$$m=\f{\lambda}{2}+\f{1}{2\lambda}\quad\text{or}\quad u_x=0,$$
that, substituted into \eqref{2.0.1}, implies
\bb\label{3.3.9}
u(x,t)=c,
\ee
for some constant $c$. According to \cite[Theorem 2.2]{freire-ch}, if $u\in C^0(H^4(\R),[0,T))\cap C^{1}(H^{3}(\R),[0,T))$ is a non-trivial solution of the CH equation, then not only $m$ cannot be constant on some simply connected, open set $\Omega\subseteq T\times[0,T)$, but also either $u_x\big|_\Omega>0$ or $u_x\big|_\Omega<0$. As a result, $u$ is a generic solution in the sense of Definition \ref{def3.5}.

\end{example}

Example \ref{example3.4} does not necessarily show that \eqref{3.3.3} can be seen as a PSS equation. However, if we restrict the solutions of the CH equation \eqref{1.0.1} to the class ${\cal B}=C^0(H^4(\R),[0,T))\cap C^1(H^3(\R),[0,T))\subseteq C^{3,1}(\R\times[0,T))$ as in proposition \ref{prop3.1}, then the {\it same} one-forms \eqref{2.2.2} give 
\bb\label{3.3.10}
\ba{lcl}
d\omega_1-\omega_3\wedge\omega_2&=&\ds{(1-\p_x^2)\Big(u_t+uu_x+\p_x\Lambda^{-2}\Big(u^2+\f{u_x^2}{2}\Big)\Big)dx\wedge dt,}\\
\\
d\omega_2-\omega_1\wedge\omega_3&=&0,\\
\\
d\omega_3-\omega_1\wedge\omega_2&=&\ds{-(1-\p_x^2)\Big(u_t+uu_x+\p_x\Lambda^{-2}\Big(u^2+\f{u_x^2}{2}\Big)\Big)dx\wedge dt,}
\ea
\ee
and thus \eqref{3.3.3} is a PSS equation in the sense of definition \ref{def3.4}.

In fact, we have the following result.

\begin{theorem}\label{teo3.1}
    Let $T>0$ and consider the function space  ${\cal B}=C^0(H^4(\R),[0,T))\cap C^1(H^3(\R),[0,T))\subseteq C^{3,1}(\R\times[0,T))$. Then the CH equation \eqref{1.0.1} is a PSS equation modelled by ${\cal B}$ if and only if the non-local evolution equation \eqref{3.3.3} is a PSS equation modelled by ${\cal B}$. Moreover, they describe exactly the same PSS, in the sense that $u\in{\cal B}$ is a generic solution of \eqref{1.0.1} if and only if it is a generic solution of \eqref{3.3.3}.
\end{theorem}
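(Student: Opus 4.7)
The plan is to exploit the residue identities (3.3.7) and (3.3.10) together with Proposition \ref{prop3.1} to transfer the PSS structure from one formulation of the equation to the other, using that the \emph{same} triad of one-forms \eqref{2.2.2} witnesses both equations as pseudospherical. First I would verify that for any $u\in{\cal B}$ the coefficients of \eqref{2.2.2} are of class $C^1$: since ${\cal B}\subseteq C^{3,1}(\R\times[0,T))$, the momentum $m=u-u_{xx}$ belongs to $C^1$ together with its first-order partial derivatives, which is precisely what is required for the components of $\omega_i$ to be $C^1$. Hence the regularity hypothesis in Definition \ref{def3.4} (with $k=1$) is met regardless of which equation we impose on $u$.

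Next I would show that, within the class ${\cal B}$, the structure equations \eqref{3.2.1} for $\{\omega_1,\omega_2,\omega_3\}$ are equivalent to either of the two evolution equations. From \eqref{3.3.7}, the residues vanish iff $m_t+2u_xm+um_x=0$, that is, iff \eqref{1.0.1} holds. From \eqref{3.3.10}, they vanish iff
\[
(1-\p_x^2)\Big(u_t+uu_x+\p_x\Lambda^{-2}\Big(u^2+\tfrac{1}{2}u_x^2\Big)\Big)=0.
\]
The key observation is that, for $u\in{\cal B}$, the inner expression belongs pointwise in $t$ to $H^3(\R)$ (using $u(\cdot,t)\in H^4$, $u_t(\cdot,t)\in H^3$, the algebra property of $H^s$ for $s>1/2$, and the fact that $\p_x\Lambda^{-2}$ maps $H^3$ into $H^4$), and $\Lambda^2:H^3(\R)\to H^1(\R)$ is an isomorphism (its Fourier multiplier $1+\xi^2$ is nowhere zero). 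Therefore the residues vanish iff \eqref{3.3.3} holds. Combining this with Proposition \ref{prop3.1} yields that \eqref{1.0.1} is a ${\cal B}$-PSS equation iff \eqref{3.3.3} is. The transversality condition $\omega_1\wedge\omega_2\not\equiv 0$ is read off \eqref{3.3.8}; its failure locus does not involve which equation is being imposed, so the fourth bullet in Definition \ref{def3.4} is identical in both formulations.

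For the second assertion, I would check the four clauses of Definition \ref{def3.5} term by term: (a) membership in ${\cal B}$ is the same statement in both cases; (b) being a strong solution of \eqref{1.0.1} is equivalent to being a strong solution of \eqref{3.3.3}, exactly by Proposition \ref{prop3.1}; (c) the $C^1$ regularity being asked of the one-forms refers to the same triad \eqref{2.2.2}, and is already guaranteed by $u\in{\cal B}\subseteq C^{3,1}$; and (d) only involves $\omega_1\wedge\omega_2$, whose expression \eqref{3.3.8} is formulation-independent, so the simply connected open set $\Omega$ on which it does not vanish serves for both equations. Consequently the generic-solution sets coincide, and since the metric $g=\omega_1^2+\omega_2^2$ and the Levi-Civita connection $\omega_3$ are built from the same forms, the two equations describe literally the same pseudospherical surface. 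The only delicate point is the injectivity of $1-\p_x^2$ on the appropriate Sobolev class, which is why the functional setting ${\cal B}$ is tailored so that the non-local expression lies in $H^3(\R)$ for each $t$; once this is in place, the argument is a direct bookkeeping of Definitions \ref{def3.4} and \ref{def3.5}.
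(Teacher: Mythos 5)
Your proposal is correct and rests on the same pillars as the paper's own proof: the identity \eqref{3.3.2} (equivalently, comparing the residues \eqref{3.3.7} and \eqref{3.3.10} for the single triad \eqref{2.2.2}), Proposition \ref{prop3.1}, and the observation that both equations are witnessed as pseudospherical by literally the same one-forms, hence the same $\omega_1\wedge\omega_2$ and the same metric. The one genuine difference is how the equivalence of the two solution sets is closed: the paper passes through the Cauchy problem, invoking Lemma \ref{lema4.1} to conclude that the solutions $u_1$ of \eqref{1.0.1} and $u_2$ of \eqref{3.3.3} emanating from the same datum coincide, whereas you argue directly at the level of an arbitrary $u\in{\cal B}$ by checking that the inner expression in \eqref{3.3.2} lies in $H^3(\R)$ for each $t$ and that $\Lambda^2=1-\p_x^2$ is injective there (Fourier multiplier $1+\xi^2$ nowhere vanishing). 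Your route is slightly more self-contained and proves the statement as literally phrased (a pointwise equivalence of generic solutions, with no reference to initial data), and it makes explicit the injectivity step that the paper leaves implicit in its discussion following \eqref{3.3.2}; the paper's route buys the additional remark, useful later, that the common solution is the unique one determined by its trace at $t=0$. Your clause-by-clause verification of Definitions \ref{def3.4} and \ref{def3.5} is sound and matches what the paper's shorter argument tacitly assumes.
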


While theorem \ref{teo3.1} tells us that the geometric object described by \eqref{3.3.7} is identical to that given by \eqref{3.3.10}, it does not say when or how we can determine whether we really have a PSS from a solution. Moreover, finding a solution of a highly non-linear equation like \eqref{1.0.1} is a rather non-trivial task. 

One of the advantages of the modern methods for studying evolution PDEs is the fact that we can extract much information about properties of solutions, that we do not necessarily know explicitly, from the knowledge of an initial datum. The equivalence between Cauchy problems given by proposition \ref{prop3.1} and theorem \ref{teo3.1} suggest that we could have qualitative information from the surface provided that we know an initial datum.

\begin{theorem}\label{teo3.2}
Let $u_0\in H^4(\R)$ be a non-trivial initial datum, and consider the Cauchy problem \eqref{3.3.4}. Then there exists a value $T>0$, uniquely determined by $u_0$, and an open strip of height $T$ ${\cal S}=\R\times(0,T)$, such that the forms \eqref{2.2.2} are uniquely determined by $u_0$, defined on ${\cal S}$, and of class $C^1$. Moreover, the Hamiltonian 
${\cal H}_1$, given in \eqref{2.0.2}, provides a conserved quantity on the solutions of problem \eqref{3.3.4}. 
\end{theorem}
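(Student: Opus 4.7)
The plan is to combine the Kato-type well-posedness of the non-local Cauchy problem \eqref{3.3.5} with the Sobolev embeddings recorded in Lemma \ref{lema3.1} and with the momentum formulation of the CH equation, and then close with a direct integration-by-parts computation for conservation of $\mathcal{H}_1$.

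I would begin by invoking the well-posedness result \cite[Theorem 3.2]{blanco}, built on Kato's framework \cite{kato}, applied to the non-local Cauchy problem \eqref{3.3.5} with datum $u_0\in H^4(\R)$. This produces a unique maximal solution
\[
u\in\mathcal{B}:=C^0(H^4(\R),[0,T))\cap C^1(H^3(\R),[0,T)),
\]
where $T=T(u_0)>0$ is the maximal lifespan, intrinsically determined by $u_0$. By Proposition \ref{prop3.1}, the same $u$ is the unique strong solution of \eqref{3.3.4} in $\mathcal{B}$, so the strip $\mathcal{S}=\R\times(0,T)$ is fixed by $u_0$ alone. Sobolev embedding (Lemma \ref{lema3.1}) then yields $H^4(\R)\hookrightarrow C^3(\R)\cap L^\infty(\R)$ and $H^3(\R)\hookrightarrow C^2(\R)\cap L^\infty(\R)$, so $u\in C^{3,1}(\mathcal{S})$ and $m=u-u_{xx}\in C^1(\mathcal{S})$. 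Every coefficient of the one-forms in \eqref{2.2.2} is a polynomial in $u$, $u_x$, $m$ and the non-zero constant $\lambda$, hence each $\omega_i$ is of class $C^1$ on $\mathcal{S}$; their uniqueness is inherited from the uniqueness of $u$.

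For conservation of $\mathcal{H}_1$, I would use that $u(\cdot,t)\in H^4(\R)$ implies $u,u_x,u_{xx},u_{xxx}$ belong to $C_0(\R)$ uniformly in $t$, which makes all boundary contributions in integrations by parts vanish. A short computation then gives
\[
\frac{d}{dt}\mathcal{H}_1=\int_\R(uu_t+u_xu_{xt})\,dx=\int_\R u(u_t-u_{xxt})\,dx=\int_\R u\,m_t\,dx,
\]
and substituting the momentum equation \eqref{2.0.1} together with one integration by parts on the term $u^2 m_x$ produces $-2\int u u_x m\,dx+2\int u u_x m\,dx=0$. Hence $\mathcal{H}_1$ is preserved along the flow in $\mathcal{B}$.

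The main obstacle is not any single hard estimate but rather the careful bookkeeping: I must ensure that the Kato solution of the non-local problem genuinely lifts to a strong solution of the CH equation in $C^{3,1}$ (which is already carried out through Proposition \ref{prop3.1} and the identity \eqref{3.3.2}) and that the resulting pointwise regularity is enough to render the coefficients of \eqref{2.2.2} of class $C^1$ on the whole strip, so one can legitimately speak afterwards of a classical $C^1$ PSS structure. Once this bridge is in place, the uniqueness of $T$ and of the forms is automatic from the uniqueness of $u$, and the conservation of $\mathcal{H}_1$ reduces to the clean computation above.
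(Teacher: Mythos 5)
Your proposal is correct and follows essentially the same route as the paper: well-posedness of \eqref{3.3.5} in $C^0(H^4(\R),[0,T))\cap C^1(H^3(\R),[0,T))$ via the Kato-type results (Lemma \ref{lema4.1}, Remark \ref{rem4.1}), transfer to \eqref{3.3.4} via Proposition \ref{prop3.1}/Theorem \ref{teo3.1}, and Sobolev embedding to conclude that the coefficients of \eqref{2.2.2} are $C^1$ on the strip. The only difference is that you verify the conservation of ${\cal H}_1$ by an explicit (and correct) integration by parts, whereas the paper simply invokes the conserved-current computation already recorded in Example \ref{example4.1}; your version makes explicit the decay hypotheses ($u(\cdot,t)\in H^4(\R)$ killing the boundary terms) that justify that step.
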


By a non-trivial function we mean one that is not identically zero.

The geometric meaning of theorem \ref{teo3.2} is the following: given a regular curve 
\bb\label{3.3.11}
\gamma(x)=(x,0,u_0(x)),\quad u_0\in H^4(\R),
\ee
let $\Gamma:=\{\gamma(x),\,x\in\R\}$. Then we can uniquely determine a solution $u(x,t)$ of the CH equation such that $\Gamma\subseteq\overline{\Gr(u))}$, where 
$$\Gr(u)=\{(x,t,u(x,t)),\,x\in\R,\,t>0\}$$
and $\overline{\Gr(u)}$ denotes the closure of $\Gr(u)$. 

Even though the existence of the forms \eqref{2.2.2} over a domain ${\cal S}\neq\emptyset$ is a necessary condition for endowing\footnote{By endowing ${\cal S}$ with a PSS structure we mean that the restriction of $u$ to ${\cal S}$ is such that the one forms satisfies the conditions for defining a PSS.} ${\cal S}$ with the structure of a PSS, it is not sufficient, since the condition $\omega_1\wedge\omega_2\neq0$ is fundamental for such, and theorem \ref{teo3.2} says nothing about it. 

It is worth mentioning that a solution $u$ of the CH equation subject to an initial datum in $H^4(\R)$ is unique and its domain is determined by the initial datum \cite[Proposition 2.7]{const1998-1} and it has to be considered intrinsically with its domain. Moreover, the invariance of the conserved quantity ${\cal H}_1$ in \eqref{2.0.2} implies $u_x(\cdot,t)\in L^2(\R)$, for each $t$ for which the solution exists. Let us fix $t_0\in(0,T)$. Then $u_x(x,t_0)\rightarrow0$ as $|x|\rightarrow\infty$. Since ${\cal H}_1(0)>0$, then $u(\cdot,t_0)\not\equiv0$ and cannot be constant. Therefore, $u_x(\cdot,t_0)$ cannot be constant either. As a result, we conclude the existence of two points $x_0$ and $x_1$ such that the mean value theorem implies $u_x(x_0,t_0)=0$, whereas for the other we have $u_x(x_1,t_0)\neq0$, say $u_x(x_1,t_0)>0$. The continuity of $u_x$ then implies the existence of an open and simply connected set $\Omega$ such that $u_x(\cdot,\cdot)\big|_{\Omega}>0$ is not constant.

These comments prove the following result.
\begin{corollary}\label{cor3.1}
Assume that $u_0$ is a solution satisfying the conditions in theorem \ref{teo3.2} and let $u$ be the unique solution of \eqref{3.3.5}. Then $u_x(\cdot,\cdot)$ vanishes at a non-countable number of points of ${\cal S}$. Moreover, there exist open and simply connected subsets $\Omega\subseteq U$ such that $u_x(x,t)$ does not vanish for any $(x,t)\in\Omega$.
\end{corollary}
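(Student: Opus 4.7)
The plan is to extract the desired zeros of $u_x$ and the desired open set from the spatial profile $x\mapsto u(x,t)$ for each fixed $t\in(0,T)$, exploiting the regularity and asymptotic decay inherited from $H^4(\R)$ together with the positivity and conservation of $\cal H_1$. First I would fix $t\in(0,T)$ and observe that, by Theorem \ref{teo3.2}, $u(\cdot,t)\in H^4(\R)$; Lemma \ref{lema3.1} then ensures that both $u(\cdot,t)$ and $u_x(\cdot,t)$ are continuous, and since they are also in $L^2(\R)$ I would invoke the standard embedding $H^1(\R)\hookrightarrow C_0(\R)$ to conclude that they vanish as $|x|\to\infty$.

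Next, I would use that $\cal H_1$ is one-half of the square of the $H^1(\R)$ norm, so $u_0\not\equiv 0$ yields $\cal H_1(u_0)>0$; the conservation law stated in Theorem \ref{teo3.2} gives $\cal H_1(u(\cdot,t))=\cal H_1(u_0)>0$ for every $t\in(0,T)$, whence $u(\cdot,t)$ is non-trivial. A continuous non-zero function on $\R$ which vanishes at infinity attains a global extremum at some interior point $x(t)\in\R$, and at this point $u_x(x(t),t)=0$. The family $\{(x(t),t):t\in(0,T)\}$ then furnishes a set of zeros of $u_x$ inside $\cal S$ whose second coordinates already range over the uncountable interval $(0,T)$, so the set itself is uncountable. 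This is precisely how the uncountability is obtained: not by a single horizontal slice, but by sweeping $t$.

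For the second assertion, I would again fix $t_0\in(0,T)$; the non-triviality and decay of $u(\cdot,t_0)$ prevent it from being a constant function, hence $u_x(\cdot,t_0)\not\equiv 0$, and there exists $x_1$ with $u_x(x_1,t_0)\neq 0$, say $u_x(x_1,t_0)>0$. Because $u_x$ is continuous on the strip $\cal S$ (another consequence of Theorem \ref{teo3.2}, which places the relevant one-forms in $C^1$), the preimage $\{(x,t)\in\cal S: u_x(x,t)>0\}$ is open and contains $(x_1,t_0)$; any open Euclidean ball around $(x_1,t_0)$ sitting inside this preimage is automatically simply connected and furnishes the required $\Omega$.

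The only genuinely delicate point I foresee is the asymptotic decay $u(\cdot,t),u_x(\cdot,t)\to 0$ as $|x|\to\infty$: it relies on $H^1(\R)\hookrightarrow C_0(\R)$, which is slightly finer than the inclusion recorded in Lemma \ref{lema3.1} but is standard. Everything else is a straightforward combination of continuity, the extreme value theorem, and a cardinality count, and should go through without complication.
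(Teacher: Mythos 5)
Your proposal is correct and follows essentially the same route as the paper: fix $t_0\in(0,T)$, use conservation of ${\cal H}_1$ to get non-triviality of $u(\cdot,t_0)$, use decay at infinity to produce a critical point of $u(\cdot,t_0)$ (the paper invokes the mean value theorem on $u_x$ where you take an interior global extremum of $u$ — an immaterial difference), sweep over $t_0$ for uncountability, and use continuity of $u_x$ to extract an open simply connected set on which $u_x$ keeps a fixed sign. Your explicit remark that the decay $u,u_x\to 0$ as $|x|\to\infty$ needs the embedding into functions vanishing at infinity, rather than mere boundedness from Lemma \ref{lema3.1}, is a fair and slightly more careful reading of the same step the paper performs via $u_x(\cdot,t)\in L^2(\R)$.
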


We have an even stronger result coming from the precedent lines.
\begin{corollary}\label{cor3.2}
Any solution of \eqref{3.3.5}, emanating from a non-trivial initial datum $u_0\in H^4(\R)$, is a generic solution in the sense of definition \ref{def3.4}.
\end{corollary}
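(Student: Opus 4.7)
The plan is to directly verify the four conditions (a)--(d) of the notion of generic solution in Definition \ref{def3.5} (the reference to Definition \ref{def3.4} in the statement of Corollary \ref{cor3.2} appears to be a typo, since it is Definition \ref{def3.5} that introduces the notion of generic solution). The bulk of the argument is absorbed by Theorem \ref{teo3.2}, Proposition \ref{prop3.1}, and Corollary \ref{cor3.1}; the only genuinely new ingredient is a short contradiction argument for the wedge condition.

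Given a non-trivial $u_0 \in H^4(\R)$, Theorem \ref{teo3.2} provides a unique solution $u \in {\cal B} = C^0(H^4(\R),[0,T)) \cap C^1(H^3(\R),[0,T))$ on the strip ${\cal S} = \R\times(0,T)$, together with the one-forms \eqref{2.2.2} of class $C^1$ on ${\cal S}$. Condition (a) of Definition \ref{def3.5} is then immediate, and (c) is the $C^1$-regularity just recorded. For (b), Proposition \ref{prop3.1} shows that $u$ is a classical solution of \eqref{1.0.1} (equivalently, of \eqref{3.3.3}) in the sense of Definition \ref{def3.2}, since ${\cal B} \subseteq C^{3,1}({\cal S})$.

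All that remains is (d): to exhibit an open simply connected $\Omega \subseteq {\cal S}$ on which $\omega_1 \wedge \omega_2$ does not vanish. Fix $\lambda \in \R\setminus\{0\}$ and set $c_\lambda := \frac{\lambda}{2} + \frac{1}{2\lambda}$, which satisfies $|c_\lambda|\geq 1$ by the elementary AM--GM bound. Corollary \ref{cor3.1} supplies an open simply connected $\Omega_0 \subseteq {\cal S}$ on which $u_x$ is nowhere zero, and by formula \eqref{3.3.8} it is enough to locate an open ball inside $\Omega_0$ on which $m \neq c_\lambda$. I would argue by contradiction: if $m \equiv c_\lambda$ on $\Omega_0$, then $m_x = m_t = 0$ on $\Omega_0$, so the momentum equation \eqref{2.0.1} collapses to $2 u_x m = 0$, which, since $u_x \neq 0$ on $\Omega_0$, forces $m \equiv 0$ there --- contradicting $|c_\lambda| \geq 1$. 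Hence some $p_0 \in \Omega_0$ satisfies $m(p_0) \neq c_\lambda$; the continuity of $m$ (as $m \in C^0(H^2(\R),[0,T))$ and $H^2 \hookrightarrow C^0$ by Lemma \ref{lema3.1}) then yields an open ball around $p_0$, contained in $\Omega_0$, on which both $u_x \neq 0$ and $m \neq c_\lambda$. This ball serves as the required $\Omega$.

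The only mildly delicate step is the contradiction argument above, and once Corollary \ref{cor3.1} has been invoked everything else amounts to bookkeeping; I expect no hidden obstacle other than remembering that the momentum equation, restricted to $\Omega_0$, is precisely what rules out the bad case $m \equiv c_\lambda$.
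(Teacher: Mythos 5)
Your proposal is correct and follows essentially the same route as the paper: regularity and existence come from Theorem \ref{teo3.2} and Proposition \ref{prop3.1}, the set where $u_x\neq 0$ comes from Corollary \ref{cor3.1}, and the degenerate case $m\equiv\frac{\lambda}{2}+\frac{1}{2\lambda}$ is excluded via the momentum equation exactly as in the computation of Example \ref{example3.4} (the paper itself leaves Corollary \ref{cor3.2} as an immediate consequence of "the precedent lines"). Your explicit contradiction argument for the wedge condition, and your observation that the citation of Definition \ref{def3.4} should read Definition \ref{def3.5}, merely make precise what the paper leaves implicit.
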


Theorem \ref{teo3.2} and its corollaries show that any non-trivial initial datum determines a PSS, compare with \cite[Theorem 2.2]{freire-ch}, and their proof is given in subsection \ref{subsec5.2}. Due to \cite[Theorem 2.2]{freire-ch}, these results are somewhat expected. The same, however, cannot be said about our next proclamation.

\begin{theorem}\label{teo3.3}
    Assume that $u_0\in H^4(\R)$ is a non-trivial, compactly supported initial datum, with $[a,b]=\supp(u_0)$ and $u$ be the corresponding solution of \eqref{3.3.4}. Then there exists two $C^1$ curves $\gamma_+,\gamma_-:[0,T)\rightarrow \overline{{\cal S}}$, and two $C^1$ functions $E_+,\,E_-:[0,T)\rightarrow\R$, where $T\in\R$ and ${\cal S}\subseteq\R^2$ are given in Theorem \ref{teo3.2}, such that:
\begin{enumerate}\letra
    \item $\pi_1(\gamma_-(t))<\pi_1(\gamma_+(t))$, for any $t\in[0,T)$, where $\pi_1:\R^2\rightarrow\R$ is the canonical projection $\pi_1(x,t)=x$;
    \item $\gamma_\pm'(t)\neq 0$, for any $t\in(0,T)$;
    \item On the left of $\gamma_-$, the first fundamental form is given by
    \bb\label{3.3.12}
    \ba{lcl}
    g&=&\ds{\f{1}{4}\Big(\lambda+\f{1}{\lambda}\Big)dx^2+2\Big(\f{\lambda}{2}+\f{1}{2\lambda}\Big)\Big[\Big(\f{\lambda}{2}-\f{1}{2\lambda}\Big)E_-(t) e^{x}-\f{1}{2}-\f{\lambda^2}{2}\Big]dxdt}\\
    \\
    &&\ds{+\Big[E_-(t)^2 e^{2x}+\Big(\Big(\f{\lambda}{2}-\f{1}{2\lambda}\Big)E_-(t)e^{x}-\f{1}{2}-\f{\lambda^2}{2}\Big)^2\Big]dt},
    \ea
    \ee
    
    \item On the right of $\gamma_+$, the first fundamental form is given by
    \bb\label{3.3.13}
    \ba{lcl}
    g&=&\ds{\f{1}{4}\Big(\lambda+\f{1}{\lambda}\Big)dx^2+2\Big(\f{\lambda}{2}+\f{1}{2\lambda}\Big)\Big[\Big(\f{\lambda}{2}-\f{1}{2\lambda}\Big)E_+(t) e^{-x}-\f{1}{2}-\f{\lambda^2}{2}\Big]dxdt}\\
    \\
    &&\ds{+\Big[E_+(t)^2 e^{-2x}+\Big(\Big(\f{\lambda}{2}-\f{1}{2\lambda}\Big)E_+(t)e^{-x}-\f{1}{2}-\f{\lambda^2}{2}\Big)^2\Big]dt}.
    \ea
    \ee
\end{enumerate}  
\end{theorem}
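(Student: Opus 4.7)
The proof will build on the fact that compact support of the initial momentum $m_0 = u_0 - u_0''$ is propagated by the CH flow. Concretely, let $q=q(x,t)$ be the Lagrangian flow of $u$, defined by the ODE $q_t = u(q,t)$, $q(x,0) = x$. Differentiating $m(q(x,t),t)q_x(x,t)^2$ in $t$ and invoking the momentum equation \eqref{2.0.1} yields the transport identity $m(q(x,t),t)q_x(x,t)^2 = m_0(x)$. Since $u \in {\cal B}$, Gronwall applied to $q_x$ gives $q_x(\cdot,t)>0$ on $[0,T)$, hence $q(\cdot,t)$ is a $C^1$ diffeomorphism of $\R$ onto itself and $\supp(m(\cdot,t)) = [q(a,t),q(b,t)]$. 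Defining $\gamma_-(t):=(q(a,t),t)$ and $\gamma_+(t):=(q(b,t),t)$ gives two $C^1$ curves in $\overline{\cal S}$ with $\pi_1(\gamma_-(t))<\pi_1(\gamma_+(t))$, which proves (a).

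For the shape of the metric, we invoke the recovery formula $u(x,t)=(\Lambda^{-2}m)(x,t)=\tfrac12\int_\R e^{-|x-y|}m(y,t)\,dy$. When $x>q(b,t)$, the sign of $x-y$ is positive throughout the support of $m(\cdot,t)$, so $u(x,t)=E_+(t)e^{-x}$ with $E_+(t):=\tfrac12\int_\R e^y m(y,t)\,dy$; consequently $u_x=-u$ and $m=0$ on that region. Symmetrically, for $x<q(a,t)$ one obtains $u(x,t)=E_-(t)e^x$ with $E_-(t):=\tfrac12\int_\R e^{-y}m(y,t)\,dy$, $u_x=u$, $m=0$. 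The $C^1$ regularity of $E_\pm$ is inherited from the $C^1$ dependence of $m$ on $t$. Plugging $m=0$, $u=E_\pm(t)e^{\mp x}$ and $u_x=\mp E_\pm(t)e^{\mp x}$ directly into the metric \eqref{2.2.3} and simplifying yields \eqref{3.3.12} and \eqref{3.3.13}, modulo the minor observation that the $dx^2$ coefficient collapses to $\bigl(\tfrac{\lambda}{2}+\tfrac{1}{2\lambda}\bigr)^2=\tfrac14\bigl(\lambda+\lambda^{-1}\bigr)^2$, which is time-independent as required.

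The most delicate point is (b). Interpreting $\gamma_\pm'$ as the $\R^2$ tangent, we have $\gamma_\pm'(t)=(u(\gamma_\pm(t),t),1)$, and the claim reduces to $u(\gamma_\pm(t),t)\neq 0$, equivalently $E_\pm(t)\neq 0$. I would establish this through a strict monotonicity identity: differentiating $E_+$ in $t$, using $m_t=-2u_xm-um_x$, and integrating by parts, the rearrangement
\[
E_+'(t)=\tfrac14\int_\R e^y(u-u_x)^2(y,t)\,dy,
\]
(obtained by substituting $m=(u-u_x)+(u-u_x)_x$ inside the integrand and carefully handling the boundary terms, which vanish since $m(\cdot,t)$ has compact support) gives $E_+'(t)>0$ on $[0,T)$, since the only way the integrand could vanish is $u\equiv c e^x$ on $\R$, which is incompatible with $u(\cdot,t)\in H^4(\R)$ once $u_0\not\equiv 0$. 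Analogously $E_-'(t)<0$. Combined with the sign of $E_\pm(0)$ — controlled by $\int e^{\mp y}m_0(y)dy$ whose non-vanishing follows from the compactness of $\supp(m_0)$ and the classical infinite propagation speed property of the CH equation (applied after time translation if $E_\pm(0)$ happens to vanish) — this forces $E_\pm(t)\neq 0$ on the punctured interval $(0,T)$. The main technical obstacle I anticipate is precisely this sign control: proving that the strict monotonicity of $E_\pm$ cannot be obstructed by an anomalous zero crossing, which is the CH analogue of the no-return arguments employed in the infinite-speed-of-propagation literature.
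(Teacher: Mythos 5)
Your treatment of parts (a), (c) and (d) follows essentially the same route as the paper: the curves $\gamma_\pm$ are built from the Lagrangian flow $q$ of Lemma \ref{lema4.3} via $\gamma_\pm(t)=(q(b,t),t)$, $(q(a,t),t)$, and the exterior representation $u(x,t)=E_\pm(t)e^{\mp x}$ is substituted into \eqref{2.2.3}. The only real difference is that you re-derive that representation from the transport identity $m(q(x,t),t)\,q_x(x,t)^2=m_0(x)$ together with $u=\Lambda^{-2}m$, whereas the paper simply invokes Lemma \ref{lema4.6} (i.e.\ \cite[Theorem 1.4]{him-cmp}); your derivation is correct and has the merit of being self-contained. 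You also correctly note that the $dx^2$ coefficient collapses to $\bigl(\tfrac{\lambda}{2}+\tfrac{1}{2\lambda}\bigr)^2=\tfrac14\bigl(\lambda+\lambda^{-1}\bigr)^2$, which exposes a typo in \eqref{3.3.12}--\eqref{3.3.13} as printed.

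Part (b), however, contains a genuine error of logic, even though the conclusion you are after is true. Having written $\gamma_\pm'(t)=(u(\gamma_\pm(t),t),1)$, you claim the statement ``reduces to $u(\gamma_\pm(t),t)\neq 0$''. It does not: the second component of that tangent vector is identically $1$, so $\gamma_\pm'(t)\neq(0,0)$ for every $t$ with no further work --- and this one-line observation is exactly how the paper disposes of (b). The monotonicity machinery you then develop is not only unnecessary but rests on a false premise: for a compactly supported $u_0$ one has $\int_\R e^{\mp y}m_0(y)\,dy=\int_\R e^{\mp y}\bigl(u_0-u_0''\bigr)(y)\,dy=0$ after two integrations by parts (all boundary terms vanish), so $E_\pm(0)=0$; its non-vanishing does \emph{not} ``follow from the compactness of $\supp(m_0)$''. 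The correct statement, which is the content of the infinite-propagation-speed results you allude to, is that $E_+$ is strictly increasing and $E_-$ strictly decreasing starting from the value $0$ at $t=0$ --- but none of this is needed for (b). Deleting that paragraph and replacing it with the observation that the second component of $\gamma_\pm'$ equals $1$ repairs the proof completely.
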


If we denote by $(g)$ the matrix of the first fundamental form and fix $t\in(0,T)$, then the metrics \eqref{3.3.12} and \eqref{3.3.13} can be written in a unified way, that is,
$$
(g)=\begin{pmatrix}
\ds{\f{1}{4}\Big(\lambda+\f{1}{\lambda}\Big)}&\ds{ -\f{1}{4}(1+\lambda^2)\Big(\lambda+\f{1}{\lambda}\Big)}\\
\\
\ds{-\f{1}{4}(1+\lambda^2)\Big(\lambda+\f{1}{\lambda}\Big)}& \ds{\f{1}{4}\Big(1+\f{1}{2\lambda}\Big)}
\end{pmatrix}+O(e^{-|x|})=:(g_0)+O(e^{-|x|}),
$$
as $|x|\rightarrow\infty$, meaning that the matrix $(g)$ is an $O(e^{-|x|})$ perturbation of the singular matrix $(g_0)$ as $|x|\rightarrow+\infty$. Therefore, the metric determined by a compactly supported initial datum becomes asymptotically singular, for each fixed $t\in(0,T)$. Hence, for $|x|\gg 1$ and $t$ fixed, the components of the metric behave like the famous peakon solutions of the CH equation.

\begin{theorem}\label{teo3.4}
If $u_0\in H^4(\R)$ and for some $x_0\in\R$, we have
\bb\label{3.3.14}
u_0'(x_0)<-\f{\|u_0\|_{1}}{\sqrt{2}},
\ee
then there exists $0<T_m<\infty$ such that the metric \eqref{2.2.3}, determined by the solution o \eqref{3.3.4}, blows up as $t\rightarrow T_m$. More precisely, the coefficients $g_{11}$ and $g_{12}$ are uniformly bounded whereas
\bb\label{3.3.15}
\liminf_{t\rightarrow T_m}\Big(\sup_{x\in\R}g_{22}(x,\tau)\Big)=+\infty.
\ee
\end{theorem}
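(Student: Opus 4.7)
The proof has three ingredients. First, I would invoke the classical Constantin--Escher wave-breaking theorem: the hypothesis \eqref{3.3.14} is the sharp sufficient condition ensuring that the maximal time of existence $T_m\in(0,\infty)$ of the solution of \eqref{3.3.4} in $H^4(\R)$ is finite, with
\begin{equation*}
\liminf_{t \to T_m^-}\;\inf_{x\in\R} u_x(x,t) = -\infty.
\end{equation*}
At the same time, conservation of $\mathcal H_1$ from Theorem \ref{teo3.2}, together with the embedding $H^1(\R)\hookrightarrow L^\infty(\R)$, yields $\|u(\cdot,t)\|_\infty \leq \|u_0\|_1/\sqrt 2$ uniformly on the strip $\R\times[0,T_m)$.

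The blow-up \eqref{3.3.15} of $g_{22}$ then follows essentially for free. Reading the coefficient off from \eqref{2.2.3}, $g_{22}$ is a sum of two squares and thus $g_{22}(x,t)\geq u_x(x,t)^2$ pointwise, so
\begin{equation*}
\sup_{x\in\R}g_{22}(\cdot,t) \geq \Big(\inf_{x\in\R}u_x(\cdot,t)\Big)^2,
\end{equation*}
and passing to $\liminf$ as $t\to T_m^-$ gives \eqref{3.3.15} immediately.

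The main obstacle is to show that $g_{11}$ and $g_{12}$ remain uniformly bounded on the whole strip. The key structural observation is that neither coefficient contains $u_x$ explicitly: both depend only on $u$ and on the momentum $m=u-u_{xx}$. Since $u$ is already controlled from the first step, the whole question reduces to a uniform $L^\infty$ bound on $m$. The heart of the proof will be to show that the Riccati-type blow-up of $u_x$ that drives the explosion of $g_{22}$ does not transfer to $m$. I anticipate this to hinge on a careful use of the Lagrangian form of the $m$-equation \eqref{2.0.1}, along which the product $m(x(t;\xi),t)\cdot q(t;\xi)^2$ is conserved (with $q$ the characteristic Jacobian), combined with the conservation of $\mathcal H_1$ to decouple the growth of $u_x$ from that of $m$ and so yield the required uniform bound for the factor $\lambda/2+1/(2\lambda)-m$ appearing in both $g_{11}$ and $g_{12}$.
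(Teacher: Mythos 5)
Your treatment of the blow-up of $g_{22}$ and the uniform bound on $u$ matches the paper's, but your plan for what you call the ``main obstacle'' rests on a false premise and would fail. A uniform $L^\infty$ bound on $m=u-u_{xx}$ up to the breaking time does not hold: the very conservation law you invoke, $m(q(x,t),t)\,q_x(x,t)^2=m_0(x)$, shows that $|m|\to\infty$ along any breaking characteristic, because $q_x(x,t)=\exp\bigl(\int_0^t u_x(q(x,s),s)\,ds\bigr)\to 0$ there as $t\to T_m$. Equivalently, since $\|u_x(\cdot,t)\|_{L^2}$ stays bounded by conservation of ${\cal H}_1$ while $\inf_x u_x(\cdot,t)\to-\infty$, the profile of $u_x$ develops a narrow negative spike whose derivative $u_{xx}$ must become unbounded, so $\|m(\cdot,t)\|_\infty\to\infty$. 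Hence $g_{11}=\bigl(\lambda/2+1/(2\lambda)-m\bigr)^2$ is \emph{not} uniformly bounded over the whole strip, and no decoupling of the growth of $u_x$ from that of $m$ can make it so.

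The claim has to be read, as the paper's proof makes explicit, along the curve of spatial minimizers: by Lemma \ref{lema4.5} there is $\xi(t)$ with $u_x(\xi(t),t)=\inf_x u_x(x,t)=:y(t)$, and since $\xi(t)$ is an interior minimum of $u_x(\cdot,t)$ one has $u_{xx}(\xi(t),t)=0$, hence $m(\xi(t),t)=u(\xi(t),t)$, which is bounded by $\|u_0\|_1$ via conservation of ${\cal H}_1$ and the Sobolev embedding (Lemma \ref{lema3.1}). This single observation yields the uniform boundedness of $g_{11}(\xi(t),t)$ and $g_{12}(\xi(t),t)$ precisely at the points where $g_{22}$ explodes, which is what the theorem asserts; it is the key idea your proposal is missing. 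A smaller point: to obtain a genuine $\liminf$ in \eqref{3.3.15} rather than a $\limsup$, you need $y(t)\to-\infty$ as an actual limit; this follows from the monotonicity supplied by the Riccati inequality $y'\leq-\tfrac{\epsilon}{4}y^2$ used in the paper, not merely from the $\liminf$ form of the Constantin--Escher conclusion that you quote.
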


Expression \eqref{3.3.15} says that the metric blows up for a finite value of $t$ and then, the surface can only be defined on a proper subset of $\R^2$. 

While Theorem \ref{teo3.3} tells us that the metric determined by an initial datum becomes asymptotically singular for each fixed $t$ as long as the solution exists, theorem \ref{teo3.4} shows us a different sort of singularity, in which the metric blows up over a strip of finite height. Our next result, however, informs us that a compactly supported initial datum actually leads to a singularity of the metric similar to that established in Theorem \ref{teo3.4}.

\begin{theorem}\label{teo3.5}
    If $u_0\in H^4(\R)$ is a non-trivial, compactly supported initial datum, then the metric \eqref{2.2.3}, determined by the solution o \eqref{3.3.4}, blows up within a strip of finite height.
\end{theorem}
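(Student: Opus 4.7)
The plan is to reduce the statement to Theorem \ref{teo3.4} via the classical wave-breaking behavior of compactly supported data under the Camassa-Holm flow. The goal is to exhibit some $t_0\in(0,T)$ and $x_0\in\R$ for which $u_x(x_0,t_0)<-\|u_0\|_1/\sqrt{2}$. Once this is achieved, restarting the Cauchy problem \eqref{3.3.4} from the profile $u(\cdot,t_0)\in H^4(\R)$ and noting that $\|u(\cdot,t_0)\|_1=\|u_0\|_1$ by conservation of ${\cal H}_1$, Theorem \ref{teo3.4} immediately yields finite-time blow-up of the metric, hence the strip of finite height.

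The key input is Theorem \ref{teo3.3}: for $x>\pi_1(\gamma_+(t))$ the solution reads $u(x,t)=E_+(t)e^{-x}$, with the symmetric behavior on the left. First, I would substitute these tail expressions into the non-local equation \eqref{3.3.3} and match the coefficients of $e^{\mp x}$ to derive evolution ODEs for $E_\pm(t)$. The resulting right-hand sides turn out to be strictly positive whenever $u$ is non-trivial (being built from non-negative densities involving $u^2$ and $u_x^2$), so $|E_\pm(t)|$ is strictly monotone and grows without bound. Writing $\xi_\pm(t)=\pi_1(\gamma_\pm(t))$, the continuity identity $|u(\xi_+(t),t)|=|E_+(t)|e^{-\xi_+(t)}$, combined with the uniform bound $|u(x,t)|\leq C\|u_0\|_1$ (via conservation of ${\cal H}_1$ and the Sobolev embedding $H^1(\R)\hookrightarrow L^\infty(\R)$ supplied by Lemma \ref{lema3.1}), forces $\xi_+(t)\to+\infty$ and symmetrically $\xi_-(t)\to-\infty$ as $t$ approaches the lifespan.

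Since $m(\cdot,t)$ is transported by the characteristic flow associated with \eqref{2.0.1} and thus remains supported in $[\xi_-(t),\xi_+(t)]$, its support spreads without bound while $\|u\|_1$ stays fixed. I would then invoke the classical characteristic-ODE estimate along the curve where $u_x$ attains its infimum (as in \cite{const1998-2}) to produce the desired pair $t_0,x_0$ with $u_x(x_0,t_0)<-\|u_0\|_1/\sqrt{2}$. The main obstacle is precisely this quantitative estimate: although qualitative wave breaking for compactly supported data is a robust phenomenon, matching the threshold $-\|u_0\|_1/\sqrt{2}$ required by Theorem \ref{teo3.4} demands careful tracking of constants through the chain \emph{tail-amplitude growth} $\to$ \emph{support spreading} $\to$ \emph{interior slope}. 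Fortunately, the hypothesis of Theorem \ref{teo3.4} is merely sufficient and not sharp, so any sufficiently negative slope at some later time already suffices, which renders the estimation feasible even if delicate.
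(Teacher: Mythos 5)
Your reduction to Theorem \ref{teo3.4} has a genuine gap at exactly the point you flag as ``delicate'': you never actually produce the pair $(x_0,t_0)$ with $u_x(x_0,t_0)<-\|u_0\|_1/\sqrt{2}$, and the chain you propose does not deliver it. Two of its links fail. First, monotonicity of $E_\pm$ (which does follow from the tail ODEs of Lemma \ref{lema4.6}, as in \cite{him-cmp}) does not imply that $|E_\pm(t)|$ grows \emph{without bound}: on a finite existence interval an increasing function can perfectly well stay bounded, so $\xi_\pm(t)\to\pm\infty$ is not established. Second, and more seriously, ``support spreading while $\|u\|_1$ stays fixed'' does not force a steep negative slope anywhere: a function of fixed $H^1$ norm whose mass spreads out tends to \emph{flatten}, so no lower bound of the form $u_x<-\|u_0\|_1/\sqrt{2}$ can be extracted from these two facts alone. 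There is also a circularity lurking in the strategy: once one knows the solution breaks (so that $\inf_x u_x(\cdot,t)\to-\infty$), the threshold of Theorem \ref{teo3.4} is trivially met at some earlier time; but you cannot use that threshold to \emph{prove} breaking without an independent argument, which is the whole content of the theorem.

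The paper avoids this entirely by quoting the known breakdown results for compactly supported data --- \cite[Corollary 1.1]{him-cmp} or \cite[Theorem 6.1]{bran-imrn} --- which assert that a non-trivial compactly supported $u_0$ has finite lifespan $T_m$ with $\int_0^{T_m}\|u_x(\cdot,\tau)\|_\infty\,d\tau=\infty$, together with the equivalence \eqref{5.5.3}. The metric blow-up then follows immediately from $g_{22}\geq f_{32}^2=u_x^2$, with no need to re-enter through the hypothesis of Theorem \ref{teo3.4}. If you want to keep your restart idea, the honest route is: cite (or reprove) the qualitative wave-breaking result for compactly supported data first, observe that $\inf_x u_x(\cdot,t)\to-\infty$ then guarantees the threshold is met at some $t_0<T_m$, and only then invoke Theorem \ref{teo3.4} --- but at that point the detour buys nothing, since the blow-up of $g_{22}$ already follows directly from $g_{22}\geq u_x^2$.
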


Theorems \ref{teo3.4} and \ref{teo3.5} tell us the existence of a height for which the co-frame of dual forms $\omega_1$ and $\omega_2$ are well defined, but their corresponding metric becomes unbounded near some finite height, meaning that the metric, and the forms as well, are only well defined on a certain strip with infinite length, but finite height.

A completely different scenario is given by our next result.

\begin{theorem}\label{teo3.6}
    Let $m_0\in H^{2}(\R)\cap L^1(\R)$ and $u$ be the corresponding solution of \eqref{3.3.4}. If $m_0(x)\geq0$ or $m_0(x)\leq0$, then \eqref{2.2.2} are $C^1$ one-forms defined on ${\cal S}=\R\times(0,\infty)$. Moreover, for any $R>0$, there exists a simply connected set ${\cal R}\subseteq\R^2$ such that $\sqrt{x^ 2+t^ 2}>R$, for any $(x,t)\in{\cal R}$, and $u_x\big|_{\cal R}>0$ or $u_x\big|_{\cal R}<0$. 
\end{theorem}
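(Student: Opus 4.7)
The plan is to prove the statement in three stages: global existence of the underlying $H^4$ solution, the consequent global definition and regularity of the triad \eqref{2.2.2}, and a local construction producing the simply connected set $\mathcal{R}$.

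First I would observe that $\Lambda^{-2}:H^2(\R)\to H^4(\R)$ is an isomorphism, so $u_0:=\Lambda^{-2}m_0\in H^4(\R)$, and invoke the local well-posedness result of Kato (cf.\ \cite[Theorem 3.2]{blanco}) to obtain a unique maximal solution $u\in C^0(H^4(\R),[0,T^*))\cap C^1(H^3(\R),[0,T^*))$ of \eqref{3.3.5}, which by Proposition \ref{prop3.1} is also a classical solution of \eqref{3.3.4}. Applying the method of characteristics to \eqref{2.0.1} along the flow $\dot q(t)=u(q(t),t)$ yields
\[
m(q(t),t)=m_0(q(0))\exp\!\Bigl(-2\!\int_0^t u_x(q(s),s)\,ds\Bigr),
\]
so the sign of $m_0$ propagates in time; without loss of generality assume $m_0\geq 0$, so that $m(\cdot,t)\geq 0$ throughout $[0,T^*)$.

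Next I would rule out finite-time breakdown and verify the regularity of \eqref{2.2.2}. From the convolution representation $u=\f{1}{2}\, e^{-|\cdot|}\ast m$, a direct computation gives
\[
(u+u_x)(x,t)=\int_x^{\infty}e^{-(y-x)}m(y,t)\,dy,\qquad (u-u_x)(x,t)=\int_{-\infty}^x e^{-(x-y)}m(y,t)\,dy,
\]
and both right-hand sides are non-negative when $m\geq 0$. Hence $|u_x|\leq u$ pointwise. Combined with the Sobolev estimate $\|u\|_\infty\leq \|u\|_1/\sqrt{2}$ and conservation of $\mathcal{H}_1$ in \eqref{2.0.2}, this yields $|u_x(x,t)|\leq \|u_0\|_1/\sqrt{2}$ uniformly on $\R\times[0,T^*)$, and the blow-up criterion \eqref{2.1.4} forces $T^*=\infty$. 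Sobolev embedding then places $u$ in $C^{3,1}(\R\times(0,\infty))$, and since the coefficients of \eqref{2.2.2} are $C^1$ combinations of $u$, $u_x$ and $m=u-u_{xx}$, the triad \eqref{2.2.2} defines $C^1$ one-forms on $\mathcal{S}=\R\times(0,\infty)$.

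For the second assertion, I would fix $R>0$ and pick any $t_0>R$. Since $m_0\not\equiv 0$ and the convolution kernel $g(x)=\f{1}{2}e^{-|x|}$ is strictly positive, $u_0=g\ast m_0\not\equiv 0$, whence $\mathcal{H}_1(u_0)=\f{1}{2}\|u_0\|_1^2>0$; conservation of $\mathcal{H}_1$ gives $\|u(\cdot,t_0)\|_1>0$, so $u(\cdot,t_0)\not\equiv 0$, and being an $H^1$-function vanishing at $\pm\infty$ it cannot be constant. Therefore there exists $x_0\in\R$ with $u_x(x_0,t_0)\neq 0$; by continuity of $u_x$ on $\mathcal{S}$ there is an open rectangle
\[
\mathcal{R}:=(x_0-\delta,x_0+\delta)\times(t_0-\eta,t_0+\eta)\subset\mathcal{S}
\]
on which $u_x$ keeps a constant sign. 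Choosing $\eta<t_0-R$ forces $\sqrt{x^2+t^2}>R$ throughout $\mathcal{R}$, and such a rectangle is manifestly simply connected.

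The principal obstacle lies in the pointwise bound $|u_x|\leq u$: extracting it from the sign assumption on $m$ via the kernel representation of $\Lambda^{-2}$ and then coupling it with $\mathcal{H}_1$-conservation to drive the global existence is the real work. Once $T^*=\infty$ is in hand, the construction of $\mathcal{R}$ reduces to a continuity argument at any sufficiently late time $t_0>R$, since the invariance of $\mathcal{H}_1$ prevents the solution from becoming trivial.
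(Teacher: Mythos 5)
Your argument is correct, and it reaches both conclusions by a route that differs from the paper's in each half. For the global existence underlying the first assertion, the paper simply invokes Lemma \ref{lema4.2} (the global well-posedness result of \cite{freire-AML}), whereas you re-derive it from scratch: sign propagation of $m$ along the characteristics of Lemma \ref{lema4.3}, the one-sided convolution identities giving $u\pm u_x\geq 0$, hence $|u_x|\leq |u|\leq \|u_0\|_1/\sqrt{2}$ by conservation of ${\cal H}_1$, which defeats the blow-up criterion \eqref{2.1.4}. This is exactly the mechanism behind the cited lemma, so nothing is lost, only self-containedness is gained. For the second assertion the divergence is more substantive: the paper argues by contradiction on the whole exterior $W_R=U\setminus\overline{B}_R(0)$, assuming $u_x\big|_{W_R}\equiv 0$ and using the operator identity $\p_x^2\Lambda^{-2}=\Lambda^{-2}-1$ together with the strict positivity of the kernel $e^{-|x-y|}/2$ to force $u(\cdot,t_0)\equiv 0$ and then $u\equiv 0$ by invariance of $\|\cdot\|_1$. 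You instead fix a time slice $t_0>R$, note that $u(\cdot,t_0)$ is a nonzero $H^1(\R)$ function (again by conservation of ${\cal H}_1$) vanishing at infinity, hence non-constant, so $u_x(x_0,t_0)\neq 0$ somewhere, and a small rectangle with $t>R$ does the job since $\sqrt{x^2+t^2}\geq t>R$ there. Your version is more elementary and sidesteps the unique-continuation-flavoured computation entirely; the paper's version establishes the stronger intermediate fact that $u_x$ cannot vanish on any open subset of the exterior of a ball, which is of independent interest but is not needed for the statement as written. One small point you share with the paper: the second conclusion silently requires $m_0\not\equiv 0$ (otherwise $u\equiv 0$ and $u_x$ never has a sign); both proofs assume this without it appearing in the hypothesis.
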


Theorem \ref{teo3.6} says that subsets of the domain of the solution of the CH equation that can be endowed with a PSS structure cannot be contained in any compact set. In view of this result, regions arbitrarily far away from the origin may be endowed with the structure of a PSS.

\section{Preliminaries}\label{sec4}

In this section we present auxiliary results that will help us to prove technical theorems and will be of vital importance in order to establish our main results.

\begin{lemma}{\tt(\cite[Proposition 2.7]{const1998-1})}\label{lema4.1}
If $u_0\in H^4(\R)$, then there exists a maximal time $T=T(u_0)>0$ and a unique solution $u$ to the Cauchy problem \eqref{3.3.5} such that $u=u(\cdot,u_0)\in C^{0}(H^{4}(\R),[0,T))\cap C^{1}(H^{3}(\R),[0,T))$. Moreover, the solution depends continuously on the initial data, in the sense that the mapping $u_0\mapsto u(\cdot,u_0):H^{4}(\R)\rightarrow C^{0}(H^{4}(\R),[0,T))\cap C^{1}(H^{3}(\R),[0,T))$ is continuous.
\end{lemma}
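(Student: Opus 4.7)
The plan is to apply Kato's abstract theory for quasilinear evolution equations of hyperbolic type. Rewrite \eqref{3.3.5} in the form
\[
u_t + A(u)u = F(u), \qquad u(0) = u_0,
\]
where $A(u) = u\,\partial_x$ and $F(u) = -\partial_x\Lambda^{-2}\bigl(u^2 + \tfrac{1}{2}u_x^2\bigr)$. Set $X = H^3(\R)$, $Y = H^4(\R)$, and let $S = \Lambda^2 = 1-\partial_x^2$ play the role of the isomorphism $Y \to X$ required by Kato's framework.

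First I would verify the three structural hypotheses. (i) For each fixed $y$ in a ball $W \subset Y$, the operator $A(y)$ is quasi-$m$-accretive on $X$: since $y \in Y \hookrightarrow C^3_b(\R)$ by Lemma \ref{lema3.1}, $y\partial_x$ with zero-order perturbation $\tfrac{1}{2}y_x$ generates a contraction semigroup, and a standard energy computation in $H^3$ yields the needed bound $\|A(y)\|_{L(Y,X)} \le c\|y\|_Y$. (ii) The commutator estimate $\|[S,A(y)]S^{-1}\|_{L(X)} \le c\|y\|_Y$ follows from Kato-Ponce type commutator bounds applied to $[1-\partial_x^2, y\partial_x]$. (iii) The Lipschitz dependence $\|A(y) - A(z)\|_{L(Y,X)} \le c\|y-z\|_X$ is immediate from the algebra property of $H^3$. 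For the source term $F$, I would show $F : Y \to Y$ is locally Lipschitz by observing that $\Lambda^{-2}\partial_x$ is a bounded operator from $H^{s-1}$ into $H^s$ with kernel $-\tfrac{1}{2}\sign(x)e^{-|x|}$ (see \eqref{3.1.4}), and combining this with the algebra property of $H^3$ for the products $u^2$ and $u_x^2$.

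With these verifications Kato's theorem produces a maximal lifespan $T = T(u_0) > 0$ and a unique solution
\[
u \in C^0\bigl(H^4(\R),[0,T)\bigr) \cap C^1\bigl(H^3(\R),[0,T)\bigr)
\]
of \eqref{3.3.5}, together with continuous dependence of $u$ on $u_0$ in the indicated topology. Continuous dependence in Kato's framework is a built-in output: the solution map $u_0 \mapsto u(\cdot,u_0)$ is continuous from $Y$ into $C^0([0,T']; Y) \cap C^1([0,T']; X)$ for any $T' < T$, because the approximating sequence constructed by iteration converges uniformly with respect to the initial datum in a neighbourhood.

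The main obstacle is purely technical: establishing the commutator estimate (ii) in the precise form Kato requires, because $A(y)$ is only a first-order operator with $H^4$ coefficient and the estimate must pass through the smoothing operator $S$. Once this is in hand, the remaining conditions reduce to routine Sobolev-algebra and convolution estimates, and the existence, uniqueness and continuous dependence all follow from the abstract machinery without further case analysis.
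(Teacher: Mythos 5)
This lemma is not proved in the paper at all: it is quoted verbatim from \cite[Proposition 2.7]{const1998-1} (see also \cite[Theorem 3.2]{blanco}), and both of those sources establish it exactly the way you propose, by casting \eqref{3.3.5} as a quasilinear evolution equation $u_t+A(u)u=F(u)$ with $A(u)=u\p_x$ and invoking Kato's theorem. So your overall strategy is the right one and coincides with the proof behind the citation.

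There is, however, one concrete error in your setup. With $Y=H^4(\R)$ and $X=H^3(\R)$, the intertwining operator $S$ in Kato's framework must be an isomorphism of $Y$ \emph{onto} $X$, and $S=\Lambda^2=1-\p_x^2$ maps $H^4(\R)$ onto $H^2(\R)$, not onto $H^3(\R)$. This is not cosmetic: your hypothesis (ii) asks for $\|[S,A(y)]S^{-1}\|_{L(X)}\leq c\|y\|_Y$, and with $S=\Lambda^2$ one computes $[\Lambda^2,y\p_x]=-y_{xx}\p_x-2y_x\p_x^2$, whose first term composed with $\Lambda^{-2}$ sends $H^3(\R)$ only into $H^2(\R)$ (since $y_{xx}\in H^2(\R)$ and multiplication by an $H^2$ function does not preserve $H^3$). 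So the estimate you single out as the main obstacle would genuinely fail in the spaces you chose. The fix is standard: take $S=\Lambda=(1-\p_x^2)^{1/2}$, which is an isomorphism $H^4(\R)\rightarrow H^3(\R)$; then $[\Lambda,y\p_x]\Lambda^{-1}$ is of order zero and the Kato--Ponce commutator estimate gives exactly the bound required. With that correction, the rest of your verification (quasi-$m$-accretivity of $y\p_x$, Lipschitz dependence of $A$, and the gain of one derivative by $\p_x\Lambda^{-2}$ making $F$ locally Lipschitz on $H^4(\R)$) is the argument of the cited references, and existence, uniqueness, maximality of $T$ and continuous dependence all follow from the abstract theorem.
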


\begin{remark}\label{rem4.1}
We observe that if, instead of $u_0\in H^4(\R)$, we assume $u_0\in H^s(\R)$, $s>3/2$, we would then conclude that $u\in C^{0}(H^{s}(\R),[0,T))\cap C^{1}(H^{s-1}(\R),[0,T))$, for the same $T$, see \cite[Theorem 3.2]{blanco}. 
\end{remark}

\begin{lemma}{\tt(\cite[Theorem 1.1]{freire-AML})}\label{lema4.2}
Assume that $m_0\in H^2(\R)\cap L^1(\R)$. If $m_0(x)\geq0$ or $m_0(x)\leq0$, for any $x\in\R$, then the corresponding solution $u$ of the CH equation exists globally. In other words, the solution $u$ of the CH equation belongs to the class $C^{0}(H^{4}(\R),[0,\infty))\cap C^{1}(H^{3}(\R),[0,\infty))$. 
\end{lemma}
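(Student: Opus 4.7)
The plan is to combine the standard wave-breaking blow-up criterion for the CH equation with the sign preservation of the momentum along characteristics. By Lemma \ref{lema4.1} there exists a maximal time $T=T(u_0)>0$ and a unique solution $u\in C^0(H^4(\R),[0,T))\cap C^1(H^3(\R),[0,T))$ with $u(\cdot,0)=u_0=\Lambda^{-2}m_0$. The argument showing \eqref{2.1.4} in \cite{const1998-2} actually yields the sharper statement that if $T<\infty$, then $\liminf_{t\to T}\inf_{x\in\R}u_x(x,t)=-\infty$ (otherwise one can continue the solution beyond $T$). Hence it suffices to produce, under the sign assumption on $m_0$, a uniform lower bound on $u_x$ that depends only on the initial datum.

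The first main step is to introduce the characteristic flow $q(x,t)$ defined by $q_t=u(q,t)$, $q(x,0)=x$. Since $u\in C^1(H^3(\R),[0,T))\subseteq C^1(\R\times(0,T))\cap C^0$ with $u(\cdot,t)$ bounded and Lipschitz, standard ODE theory shows that for each $t\in[0,T)$ the map $x\mapsto q(x,t)$ is an increasing $C^1$-diffeomorphism of $\R$, with $q_x(x,t)=\exp\!\bigl(\int_0^t u_x(q(x,\tau),\tau)\,d\tau\bigr)>0$. Using the momentum form \eqref{2.0.1} together with $q_t=u(q,t)$, a direct differentiation yields
\begin{equation*}
\frac{d}{dt}\bigl[m(q(x,t),t)\,q_x(x,t)^2\bigr]=\bigl(m_t+u\,m_x+2u_x m\bigr)(q(x,t),t)\,q_x(x,t)^2=0,
\end{equation*}
so that $m(q(x,t),t)\,q_x(x,t)^2=m_0(x)$ for all $x\in\R$ and $t\in[0,T)$. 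Because $q_x>0$, this identity transfers the sign of $m_0$ to $m(\cdot,t)$ on the whole line.

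The second main step is to turn this sign preservation into a pointwise bound $|u_x|\leq u$ (resp.\ $|u_x|\leq -u$). Indeed, with $G(x)=e^{-|x|}/2$ as in \eqref{3.1.3} and \eqref{3.1.4} we have $u=G\ast m$ and $u_x=(\partial_x G)\ast m$; when $m\geq 0$ everywhere, the nonnegativity of $G$ gives $u\geq 0$, and the trivial estimate $|\partial_x G|\leq G$ gives
\begin{equation*}
|u_x(x,t)|\leq \tfrac{1}{2}\!\int_{\R}e^{-|x-y|}m(y,t)\,dy=u(x,t),\qquad (x,t)\in\R\times[0,T).
\end{equation*}
The case $m_0\leq 0$ is symmetric. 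In particular $u_x(x,t)\geq -\|u(\cdot,t)\|_{\infty}$.

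It remains to control $\|u(\cdot,t)\|_{\infty}$ uniformly in $t$. This follows from conservation of $\mathcal{H}_1$: integrating the conservation law associated with $\beta$ of Example \ref{example4.1} (or, equivalently, invoking the bi-Hamiltonian structure \eqref{2.0.2}) gives $\|u(\cdot,t)\|_{1}^{2}=\|u_0\|_{1}^{2}$ for all $t\in[0,T)$, since $u\in C^0(H^4(\R),[0,T))\subseteq C^0(H^1(\R),[0,T))$ decays fast enough at $\pm\infty$ for the boundary terms to vanish. The one-dimensional Sobolev embedding $H^1(\R)\hookrightarrow L^\infty(\R)$ (Lemma \ref{lema3.1}) then yields $\|u(\cdot,t)\|_{\infty}\leq C\|u_0\|_{1}$ with $C$ absolute. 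Combining, $\inf_{x\in\R}u_x(x,t)\geq -C\|u_0\|_{1}$ for every $t\in[0,T)$, which contradicts the blow-up criterion unless $T=\infty$. The regularity statement $u\in C^0(H^4(\R),[0,\infty))\cap C^1(H^3(\R),[0,\infty))$ follows by iterating Lemma \ref{lema4.1} on each interval $[0,T)$ and using the continuous dependence on the initial datum. The main obstacle is the passage from the conservation law on characteristics to a uniform sup-norm bound on $u_x$; once $m$ keeps a fixed sign, the convolution estimate $|u_x|\leq u$ and the $H^1$ conservation do the rest.
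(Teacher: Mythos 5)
The paper does not prove Lemma \ref{lema4.2}; it is quoted verbatim from \cite[Theorem 1.1]{freire-AML}, so there is no internal proof to compare against. Your argument is the standard (and correct) one behind that cited result: the identity $\frac{d}{dt}\bigl[m(q(x,t),t)q_x(x,t)^2\bigr]=0$ propagates the sign of $m_0$, the kernel estimate $|\partial_xG|\le G$ then gives $|u_x|\le |u|\le C\|u_0\|_1$ via conservation of ${\cal H}_1$, and the wave-breaking blow-up criterion of \cite{const1998-1,const1998-2} rules out finite-time breakdown. All steps check out; the only cosmetic remark is that the final regularity assertion follows directly from the fact that the maximal existence time is $+\infty$ (the continuation criterion is never triggered), rather than from any genuine iteration of Lemma \ref{lema4.1}.
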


\begin{lemma}\label{lema4.3}{\tt(\cite[Theorem 3.1]{const2000-1})} Let $u_0\in H^3(\R)$ and $[0,T)$ be the maximal interval of existence of the corresponding solution of \eqref{3.3.5}. Then
\bb\label{4.2.2}
\left\{
\ba{lcl}
q_t(x,t)&=&u(q,t),\\
\\
q(x,0)&=&x,
\ea
\right.
\ee
has a unique solution $q\in C^1(\R\times[0,T),\R)$. Moreover, for every fixed $t\in[0,T)$, the function $q(\cdot,t)$ is an increasing diffeomorphism of the line.
\end{lemma}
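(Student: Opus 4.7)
The plan is to derive everything from classical ODE theory applied to the characteristic equation \eqref{4.2.2}, using the regularity of $u$ supplied by Lemma \ref{lema4.1} (in the $s=3$ form noted in Remark \ref{rem4.1}) together with the Sobolev embedding $H^3(\R)\hookrightarrow C^2(\R)\cap L^\infty(\R)$ from Lemma \ref{lema3.1}.

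First I would fix an arbitrary $T'\in(0,T)$ and set $L_{T'}:=\sup_{t\in[0,T']}\|u_x(\cdot,t)\|_\infty$, which is finite since $u\in C^0(H^3(\R),[0,T'])$ and the embedding yields $\|u_x(\cdot,t)\|_\infty\le C\|u(\cdot,t)\|_3$. Hence $u$ is Lipschitz in $x$ uniformly in $t\in[0,T']$ and is jointly continuous in $(x,t)$. The Picard--Lindel\"of theorem then produces, for each $x\in\R$, a unique $q(x,\cdot)\in C^1([0,T'])$ solving \eqref{4.2.2}; letting $T'\uparrow T$ gives a unique $q:\R\times[0,T)\to\R$ solving the problem, and the relation $q_t(x,t)=u(q(x,t),t)$ is continuous as a composition of continuous functions.

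Next I would establish differentiability in $x$ and positivity of $q_x$. Since $u\in C^0(H^3(\R),[0,T))$ implies $u_x$ is continuous on $\R\times[0,T)$, the standard theorem on smooth dependence on initial conditions gives $q\in C^1(\R\times[0,T),\R)$, with $\eta(x,t):=q_x(x,t)$ solving the linearised equation
\begin{equation*}
\eta_t(x,t)=u_x(q(x,t),t)\,\eta(x,t),\qquad \eta(x,0)=1.
\end{equation*}
This scalar linear ODE in $t$ integrates explicitly to
\begin{equation*}
q_x(x,t)=\exp\!\Big(\int_0^t u_x(q(x,s),s)\,ds\Big)>0,
\end{equation*}
so $q(\cdot,t)$ is strictly increasing.

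Finally I would upgrade strict monotonicity to a diffeomorphism of $\R$. The uniform bound $|u_x(q(x,s),s)|\le L_{T'}$ on $[0,T']$ gives $e^{-tL_{T'}}\le q_x(x,t)\le e^{tL_{T'}}$, so $q(\cdot,t)$ is bilipschitz with derivative bounded away from $0$ and $\infty$. Thus $q(\cdot,t)$ is an injective $C^1$ map whose image is simultaneously open, closed, and unbounded in both directions, hence equal to $\R$. I expect the main subtlety to be bookkeeping rather than any deep obstacle: all estimates are uniform only on compact sub-intervals $[0,T']\subset[0,T)$, so one must state everything locally in $t$ and note that any fixed $t_0\in[0,T)$ lies in some such interval. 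No control uniform up to $T$ is needed because the conclusions of the lemma are pointwise in $t\in[0,T)$.
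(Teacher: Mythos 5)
Your argument is correct, and the paper itself offers no proof of Lemma \ref{lema4.3}: it is quoted verbatim from \cite[Theorem 3.1]{const2000-1}, whose proof is precisely the classical ODE reasoning you give (Picard--Lindel\"of with the Lipschitz bound from the Sobolev embedding, followed by the explicit formula $q_x(x,t)=\exp\big(\int_0^t u_x(q(x,s),s)\,ds\big)>0$ and the derivative bounds that force surjectivity). So your proposal matches the intended proof in both substance and detail.
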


\begin{lemma}\label{lema4.4}{\tt(\cite[Theorem 4.2]{const1998-2})} 
Given an initial datum $u_0\in H^3(\R)$ satisfying \eqref{3.3.15}, then the corresponding solution $u$ of the CH equation subject to $u(x,0)=u_0(x)$ breaks at finite time, that is, there exists a finite time $T_m>0$ such that 
\bb\label{4.2.1}
\lim_{t\rightarrow T_m}\inf\Big(\inf_{x\in\R} u_x(t,x)\Big)=-\infty.
\ee
\end{lemma}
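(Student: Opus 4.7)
The plan is to follow the method of characteristics and convert the problem to a scalar Riccati inequality along the flow, exploiting the conservation of the $H^1$-norm of $u$. First I would invoke Lemma~\ref{lema4.3} to obtain the characteristic flow $q=q(x,t)$ solving $q_t=u(q,t)$, $q(x,0)=x$. Rewriting the equation in the non-local form \eqref{3.3.3} (valid in the Kato class $C^0(H^s,[0,T))\cap C^1(H^{s-1},[0,T))$ with $s=3$ here, cf.\ Remark~\ref{rem4.1}), differentiating in $x$ and using the operator identity $\p_x^2\Lambda^{-2}=\Lambda^{-2}-1$, I would obtain
\begin{equation*}
u_{tx}+u u_{xx}=u^2-\tfrac{1}{2}u_x^2-\Lambda^{-2}\!\bigl(u^2+\tfrac{1}{2}u_x^2\bigr).
\end{equation*}
Evaluating along $y=q(x_0,t)$ and setting $M(t):=u_x(q(x_0,t),t)$, the chain rule then yields the Riccati-type ODE
\begin{equation*}
\dot M(t)=-\tfrac{1}{2}M(t)^2+u^2\bigl(q(x_0,t),t\bigr)-\Lambda^{-2}\!\bigl(u^2+\tfrac{1}{2}u_x^2\bigr)\bigl(q(x_0,t),t\bigr).
\end{equation*}

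The crucial step is the sharp pointwise lower bound
\begin{equation*}
\Lambda^{-2}\!\bigl(u^2+\tfrac{1}{2}u_x^2\bigr)(x)\geq \tfrac{1}{2}u^2(x),\qquad x\in\R,
\end{equation*}
which I would derive from the convolution representation $\Lambda^{-2}f=p\ast f$ with $p(x)=e^{-|x|}/2$. Integrating by parts on the two half-lines $y<x$ and $y>x$ produces the identity
\begin{equation*}
(p\ast u^2)(x)=u^2(x)-\int_{\R}\sign(x-y)\,e^{-|x-y|}\,u(y)u_y(y)\,dy,
\end{equation*}
and Cauchy--Schwarz bounds the residual integral by $p\ast u^2+p\ast u_x^2$, yielding $u^2\leq 2\,p\ast u^2+p\ast u_x^2=2\,\Lambda^{-2}\bigl(u^2+u_x^2/2\bigr)$. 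Combined with the Sobolev estimate $\|u(\cdot,t)\|_\infty^2\leq \tfrac{1}{2}\|u(\cdot,t)\|_{H^1}^2$ and the invariance $\|u(\cdot,t)\|_{H^1}=\|u_0\|_1$ coming from the conservation of ${\cal H}_1$, the ODE above collapses to the scalar Riccati inequality
\begin{equation*}
\dot M(t)\leq -\tfrac{1}{2}M(t)^2+\tfrac{1}{4}\|u_0\|_1^2.
\end{equation*}

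The hypothesis $u_0'(x_0)<-\|u_0\|_1/\sqrt{2}$ gives $M(0)^2>\tfrac{1}{2}\|u_0\|_1^2$, whence $\dot M(0)<0$; a standard comparison argument then shows that $M(t)$ remains in the region $\{M<-\|u_0\|_1/\sqrt{2}\}$, and once $|M|\geq\|u_0\|_1$ the inequality sharpens to $\dot M\leq -M^2/4$, whose explicit integration produces a finite blow-up time $T_m$ with $M(t)\to-\infty$ as $t\uparrow T_m$, establishing \eqref{4.2.1}. The hard part is the sharp estimate of the nonlocal term: without the inequality $\Lambda^{-2}(u^2+u_x^2/2)\geq u^2/2$ one would only obtain $\dot M\leq -M^2/2+\|u_0\|_1^2/2$, which would require the strictly stronger initial condition $u_0'(x_0)<-\|u_0\|_1$ to trigger blow-up and would miss the sharp threshold appearing in \eqref{3.3.14}.
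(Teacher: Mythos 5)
Your argument is correct, but note first that the paper offers no proof of this lemma at all: it is imported verbatim as \cite[Theorem 4.2]{const1998-2} (and the hypothesis should read \eqref{3.3.14}, not \eqref{3.3.15} --- you interpreted it correctly). What you have written is essentially a faithful reconstruction of the Constantin--Escher proof, with one structural difference worth flagging. You track $M(t)=u_x(q(x_0,t),t)$ along the characteristic through $x_0$ supplied by Lemma \ref{lema4.3}, whereas the cited proof --- and the echo of it that does appear in this paper, inside the proof of Theorem \ref{teo3.4} --- works with the infimum function $y(t)=\inf_{x}u_x(x,t)=u_x(\xi(t),t)$ of Lemma \ref{lema4.5}, which is only Lipschitz and a.e.\ differentiable but has the advantage that $u_{xx}(\xi(t),t)=0$, so the transport term $uu_{xx}$ drops out for free and one lands directly on $y'\le \tfrac14\|u_0\|_1^2-\tfrac12 y^2$ a.e.; the paper then quotes the cruder consequence $y'\le-\tfrac{\epsilon}{4}y^2$ from \cite[page 240]{const1998-1}. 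Your characteristic route avoids the a.e.-differentiability technicalities at the cost of having to carry $uu_{xx}$ through the chain rule (which you do correctly, since $\frac{d}{dt}M=u_{tx}+uu_{xx}$ absorbs it), and both routes hinge on the same two ingredients: the convolution lower bound $\Lambda^{-2}(u^2+u_x^2/2)\ge u^2/2$, which you prove correctly by the two half-line integrations by parts, and the conservation of $\mathcal{H}_1$ together with $\|u\|_\infty^2\le\tfrac12\|u\|_1^2$. Both yield the same Riccati inequality and the same sharp threshold $u_0'(x_0)<-\|u_0\|_1/\sqrt2$, so your proof is a valid and essentially equivalent substitute for the cited one.
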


\begin{lemma}{\tt(\cite[Theorem 2.1]{const1998-2})}\label{lema4.5}
Let $T>0$ and $v\in C^1(H^2(\R),[0,T))$ be a given function. Then, for any $t\in[0,T)$, there exists at least one point $\xi(t)\in \R$ such that
\begin{align}\label{4.2.2}
    y(t) :=\inf\limits_{x\in\R}v_x(x,t) = v_x(\xi(t),t)
\end{align}
and the function $y$ is almost everywhere differentiable in $(0,T)$, with $y'(t)=v_{tx}(\xi(t),t)$ almost everywhere in $(0,T)$.
\end{lemma}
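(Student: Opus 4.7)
The plan is to establish the lemma in three stages: (i) existence of a minimizing point $\xi(t)$ at each time, (ii) local Lipschitz continuity of $y$, which will give almost everywhere differentiability, and (iii) the identity $y'(t)=v_{tx}(\xi(t),t)$ via an envelope-type argument. The analytic backbone throughout is the Sobolev embedding of Lemma \ref{lema3.1}: since $v\in C^1(H^2(\R),[0,T))$, both $v(\cdot,t)$ and $v_t(\cdot,t)$ lie in $H^2(\R)\hookrightarrow C^1(\R)\cap L^\infty(\R)$, hence $v_x(\cdot,t),v_{tx}(\cdot,t)\in H^1(\R)\hookrightarrow C^0(\R)\cap L^\infty(\R)$, with both quantities continuous and vanishing at infinity.

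First I would prove existence of $\xi(t)$. For nontrivial $v(\cdot,t)\in H^2(\R)$, the function vanishes at infinity but is not identically zero, so it attains a strictly positive maximum or a strictly negative minimum; near such a critical point $v_x$ takes values of opposite signs, forcing $y(t)=\inf_{x\in\R}v_x(x,t)<0$. Then the sublevel set $K=\{x\in\R:v_x(x,t)\le y(t)/2\}$ is closed by continuity of $v_x(\cdot,t)$ and bounded by its decay at infinity, hence compact. The continuous function $v_x(\cdot,t)$ attains its minimum on $K$, and this minimum must equal the global infimum $y(t)$, yielding a point $\xi(t)\in K$. The degenerate case $v(\cdot,t)\equiv 0$ is handled trivially by choosing any $\xi(t)\in\R$.

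Next I would show that $y$ is locally Lipschitz on $[0,T)$. For $0\le t_1<t_2<T$ and every $x\in\R$, the fundamental theorem of calculus gives
\bb
v_x(x,t_2)-v_x(x,t_1)=\int_{t_1}^{t_2}v_{tx}(x,s)\,ds.
\ee
Taking the supremum in $x$ and applying the Sobolev bound $\|v_{tx}(\cdot,s)\|_\infty\le C\,\|v_t(\cdot,s)\|_{H^2}$, which by continuity of $s\mapsto v_t(\cdot,s)$ is bounded on any compact $[a,b]\subset[0,T)$, I get $\|v_x(\cdot,t_2)-v_x(\cdot,t_1)\|_\infty\le C_{[a,b]}\,|t_2-t_1|$. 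Since $|y(t_2)-y(t_1)|\le\|v_x(\cdot,t_2)-v_x(\cdot,t_1)\|_\infty$, the function $y$ is locally Lipschitz, hence absolutely continuous, hence differentiable almost everywhere in $(0,T)$ by Lebesgue's theorem.

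Finally, for the envelope identity, at any $t_0\in(0,T)$ where $y'(t_0)$ exists I fix $\xi=\xi(t_0)$ and exploit the pointwise inequality $y(t)\le v_x(\xi,t)$, with equality at $t_0$, to obtain
$$y(t)-y(t_0)\le v_x(\xi,t)-v_x(\xi,t_0)=\int_{t_0}^{t}v_{tx}(\xi,s)\,ds.$$
Dividing by $t-t_0>0$ and sending $t\to t_0^+$ gives $y'(t_0)\le v_{tx}(\xi,t_0)$; dividing by $t-t_0<0$ and sending $t\to t_0^-$ reverses the sense and yields the opposite inequality, producing the equality $y'(t_0)=v_{tx}(\xi(t_0),t_0)$. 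The hard part here is not any single analytic estimate but rather handling the nonuniqueness of $\xi(t)$: the map $t\mapsto\xi(t)$ need not be continuous or even canonically measurable, so no consistent selection is guaranteed. The envelope argument sidesteps this cleanly because the identity is local at each $t_0$ and any admissible choice of $\xi(t_0)$ works. A secondary technical point is justifying the passage to the limit in the difference quotient at fixed $\xi$, which follows from the Sobolev inequality $|v_{tx}(\xi,s)-v_{tx}(\xi,t_0)|\le C\|v_t(\cdot,s)-v_t(\cdot,t_0)\|_{H^2}$ combined with the assumed continuity of $t\mapsto v_t(\cdot,t)$ in $H^2(\R)$.
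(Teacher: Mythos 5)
The paper offers no proof of this lemma: it is imported verbatim as \cite[Theorem~2.1]{const1998-2}, so the only meaningful comparison is with the original Constantin--Escher argument, and your proof is correct and essentially reproduces it. The three steps you use --- attainment of the infimum via the decay of $v_x(\cdot,t)\in H^1(\R)\hookrightarrow C^0(\R)$ at infinity, local Lipschitz continuity of $y$ from $|y(t_2)-y(t_1)|\le\|v_x(\cdot,t_2)-v_x(\cdot,t_1)\|_\infty$ together with Lebesgue/Rademacher, and the two-sided difference-quotient (envelope) inequality at a fixed minimizer $\xi(t_0)$, which rightly avoids any measurable selection of $t\mapsto\xi(t)$ --- are exactly the standard route, and each analytic step is adequately justified by the Sobolev embedding and the $C^1$-in-time hypothesis.
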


\begin{lemma}{\tt(\cite[Theorem 1.4]{him-cmp})}\label{lema4.6}
    If $u_0\in H^4(\R)$, is compactly supported, then there exist $C^1$ real valued functions $E_\pm$ such that
    $$
    u(x,t)=\left\{\ba{lcl}
    E_+(t)e^{-x},&\text{for} & x>q(b,t),\\
    \\
    E_-(t)e^{x},&\text{for} & x<q(a,t),
    \ea\right.
    $$
where $q(\cdot,\cdot)$ is the function given in Lemma \ref{lema4.3}, for any $t>0$ such that the solution exists.
\end{lemma}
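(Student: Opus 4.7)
The plan is to propagate the support of the momentum $m$ along the characteristic flow from Lemma \ref{lema4.3}, then invert the Helmholtz operator $\Lambda^2 = 1-\p_x^2$ via \eqref{3.1.3} to recover $u$, obtaining the claimed exponential tails explicitly.

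First I would observe that if $u_0 \in H^4(\R)$ vanishes outside $[a,b]$, then so does $u_0''$, and hence $\supp(m_0) \subseteq [a,b]$. Next, combining the conservative form \eqref{2.0.1} with the characteristic equation \eqref{4.2.2}, differentiation along characteristics gives
$$
\f{d}{dt}\big[m(q(x,t), t)\big] = m_t(q,t) + u(q,t)\, m_x(q,t) = -2\, u_x(q(x,t), t)\, m(q(x,t), t).
$$
Since $u \in C^0(H^4(\R),[0,T))$ embeds into $C^0(C^3(\R) \cap L^\infty(\R), [0,T))$ by Lemma \ref{lema3.1}, the coefficient $u_x$ is continuous and bounded on compact sub-intervals of $[0,T)$, so this is a well-posed scalar linear ODE in $t$ for each fixed $x$. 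It integrates to
$$
m(q(x,t), t) = m_0(x)\, \exp\Big(-2\int_0^t u_x(q(x,s), s)\, ds\Big).
$$
Hence $m_0(x) = 0$ forces $m(q(x,t),t) = 0$, and because $q(\cdot, t)$ is an increasing diffeomorphism of $\R$, we conclude $\supp(m(\cdot, t)) \subseteq [q(a,t), q(b,t)]$ for every $t \in [0,T)$.

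Now I would invert $\Lambda^2$ via \eqref{3.1.3}, writing $u(x,t) = \f{1}{2}\int_\R e^{-|x-y|}\, m(y,t)\, dy$. For $x > q(b,t)$ the variable $y$ ranges over $\supp(m(\cdot,t)) \subseteq [q(a,t),q(b,t)]$, so $y < x$ and $|x-y|=x-y$, giving
$$
u(x,t) = e^{-x} \cdot \f{1}{2}\int_{q(a,t)}^{q(b,t)} e^{y}\, m(y,t)\, dy =: E_+(t)\, e^{-x}.
$$
The case $x < q(a,t)$ is symmetric and defines $E_-(t) := \f{1}{2} \int_{q(a,t)}^{q(b,t)} e^{-y}\, m(y,t)\, dy$.

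For the $C^1$ regularity of $E_\pm$, I would extend the integrand by zero and write $E_\pm(t) = \f{1}{2} \int_\R e^{\pm y}\, m(y,t)\, dy$, differentiate under the integral using $m \in C^1(H^1(\R),[0,T))$ (which follows from $u \in C^1(H^3(\R),[0,T))$), substitute $m_t = -2u_x m - u m_x$ from \eqref{2.0.1}, and integrate by parts in $y$; all boundary terms vanish because $m(\cdot, t)$ is compactly supported. The main technical obstacle is the rigorous support-propagation argument: one must ensure the Lipschitz regularity of $u_x$ in $x$ along the flow is enough to guarantee the characteristic ODE for $m \circ q$ has a unique solution for each $x$ on the whole interval $[0,T)$, and that the flow image of $[a,b]$ is precisely $[q(a,t), q(b,t)]$ rather than a strict subset. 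Both points are already handled by Lemma \ref{lema4.3} together with Sobolev embedding, so the argument is clean.
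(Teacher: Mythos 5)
Your argument is correct, but note that the paper does not prove this lemma at all: it is quoted verbatim (up to the regularity upgrade) from Himonas--Misiolek--Ponce--Zhou \cite[Theorem 1.4]{him-cmp}, and the author merely remarks that ``a careful analysis on the proof'' of that theorem shows the $E_\pm$ are $C^1$ rather than just continuous. What you have written is essentially that reference's proof reconstructed in full: the support of $m_0$ is propagated by the flow $q$ via the identity $m(q(x,t),t)=m_0(x)\exp\bigl(-2\int_0^t u_x(q(x,s),s)\,ds\bigr)$ (which the paper itself derives independently as \eqref{6.0.1} in section \ref{sec6}, for a different purpose), and then $u=\tfrac12 e^{-|\cdot|}\ast m$ is evaluated outside $[q(a,t),q(b,t)]$ to produce the exponential tails. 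Your differentiation under the integral sign, using $m\in C^1(H^1(\R),[0,T))$, the substitution $m_t=-2u_xm-um_x$ and an integration by parts with vanishing boundary terms, is precisely the ``careful analysis'' the paper alludes to but does not carry out, so your write-up actually supplies a justification the paper leaves implicit. Two minor points you could make fully explicit: $m(\cdot,t)$ is compactly supported and lies in $H^2(\R)\subset L^2(\R)$, hence in $L^1(\R)$, so the convolution representation $u=\Lambda^{-2}m$ from \eqref{3.1.3} is legitimate; and on any compact time interval the supports $[q(a,t),q(b,t)]$ stay inside a fixed compact set, which is what lets you dominate $e^{\pm y}m(y,t)$ uniformly when differentiating under the integral.
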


The original statement of Lemma \ref{lema4.6} says that $s>5/2$ and the functions $E_\pm$ are continuous. It is immediate then its validity for $s=4$, that is our case, and a careful analysis on the proof of \cite[Theorem 1.4]{him-cmp} reveals that the functions are continuously differentiable.

\section{Proof of the main results}\label{sec5}

\subsection{Proof of theorem \ref{teo3.1}}\label{subsec5.1}

From \eqref{3.3.2}, $u\in{\cal B}$ is a solution of \eqref{1.0.1} in the sense of definition \ref{def3.4} if and only if it is a solution of \eqref{3.3.3} in the same sense. Let $w_0\in H^4(\R)$, $u_1$ and $u_2$ be the corresponding solutions of \eqref{1.0.1} and \eqref{3.3.3}, respectively, subject to the same initial condition $u_1(x,0)=u_2(x,0)=w_0(x)$. Proposition \ref{prop3.1} combined with lemma \ref{lema4.1} inform us that $u_1=u_2$ and this is the only solution for both equations satisfying the given initial condition. As a result, they determine the same forms $\omega_1,\omega_2,\omega_3$, and the same PSS as well.

\subsection{Proof of theorem \ref{teo3.2}}\label{subsec5.2}

Lemma \ref{lema4.1}, jointly with remark \ref{rem4.1} and Theorem \ref{teo3.1}, assures that \eqref{3.3.5} has a unique solution $u\in C^0(H^4(\R),[0,T))\cap C^1(H^3(\R),[0,T))\subseteq C^{3,1}(\R\times[0,T))$, for a $T$ uniquely determined by $u_0$. We then conclude that the one-forms \eqref{2.2.2} are $C^1$ and defined on the open and connected set ${\cal S}=\R\times(0,T)$.

Due to $u_0\in H^4(\R)$, then $\|u_0\|_1<\infty$. Moreover, the functional ${\cal H}_1(t)$, given in \eqref{2.0.2}, is constant, that is, ${\cal H}_1(t)={\cal H}_1(0)$, $t\in(0,T)$. Given that $t\mapsto{\cal H}(t)=\|u\|_1^2/2$ is invariant, we conclude $\|u\|_1=\|u_0\|_1$.

\subsection{Proof of Theorem \ref{teo3.3}}\label{subsec5.3}

Let $u$ be the corresponding solution of the CH equation subject to $u(x,0)=u_0(x)$ and $q$ be the function given by Lemma \ref{lema4.3}.

Define $\varphi(x,t):\R\times[0,T)\rightarrow \R\times[0,T)$ by $\varphi(x,t)=(q(x,t),t)$. Then $\varphi$ is a bijection fixing $\R\times\{0\}$ and $\varphi\big|_{\R\times(0,T)}$ is a $C^1$ diffeomorphism, see \cite[Theorem 3.1]{freire-ch}.

Let $\gamma_\pm:[0,T)\rightarrow\overline{{\cal S}}$ be given by $\gamma_-(t)=\varphi(a,t)$ and $\gamma_+(t)=\varphi(b,t)$. Then $\gamma_-'(t)=(u(\varphi(a,t)),1)$ and $\gamma_+'(t)=(u(\varphi(b,t)),1)$. Again, by Lemma \ref{lema4.3} we have
$$
\pi_1(\gamma_-(t))=q(a,t)<q(b,t)=\pi_1(\gamma_+(t)),
$$
for each $t\in(0,T)$.

Let $p\in {\cal S}$ be a point on the left of $\gamma_-$. This then implies that
$$x:=\pi_1(p)<\pi_1(\gamma_-(t))=q(a,t).$$

By Lemma \ref{lema4.6} we have $u(x,t)=E_-(t)e^{x}$, that substituted into \eqref{2.2.3} gives \eqref{3.3.12}. To get \eqref{3.3.13} we proceed mimetically as before and for this reason is omitted.

\subsection{Proof of theorem \ref{teo3.4}}
Let us define 
\bb\label{5.4.1}
y(t)=\inf_{x\in\R} u_x(x,t).
\ee

By lemma \ref{lema4.5} we can find $\xi(t)$ (despite the notation, it is not a function, see \cite[Theorem 2.1]{const1998-2}) such that $y(t)=u_x(\xi(t),t)$ and it is an a.e. $C^1$ function. Moreover, \cite[Theorem 4.2]{const1998-2} shows in its demonstration that $y$ is Lipschitz and $y(0)\leq u_0'(x_0)<0$.

Differentiating \eqref{3.3.3} with respect to $x$ and using $y(t)$ above, we obtain
$$
y'(t)+\f{y(t)^2}{2}=u(\xi(t),t)^2-\Big(\p_x\Lambda^{-2}\Big(u^2+\f{u_x^2}{2}\Big)\Big)(\xi(t),t).
$$
In \cite[page 240]{const1998-1} it was proved that $y(t)$ satisfies the differential inequality
$$
y'(t)\leq -\f{\epsilon}{4}y(t)^2,
$$
for some $\epsilon\in(0,1)$, implying that it is a negative and non-increasing function satisfying the inequality
\bb\label{5.4.2}
\f{\epsilon}{4}t+\f{1}{y(0)}\leq \f{1}{y(t)}.
\ee

Since $y(t)<y(0)<0$, then \eqref{5.4.2} is only valid for a finite range of values for $t$. As a result, we conclude the existence of $T_m$ such that \eqref{5.4.2} holds for $t\in(0,T_m)$, and then, the solution $u$, as a function of $t$, is only defined on $(0,T_m)$.

On the other hand, \eqref{5.4.2} can be seen in a slightly different way, since it implies
$$
0\leq\f{\epsilon}{4}t-\f{1}{y(t)}\leq -\f{1}{y(0)},
$$
which tells us that $y(t)\rightarrow-\infty$ before $t$ reaches $-4/(\epsilon y(0))$ (which gives an upper bound to $T_m$). As a result, if $(t_k)_{k}\subseteq(0,T_m)$ is a convergent sequence to $T_m$, we then have $y(t_k)\rightarrow-\infty$ as $k\rightarrow\infty$. This, in particular, is nothing but \eqref{4.2.1}.

Let us evaluate the coefficients $g_{ij}$ of the metric \eqref{2.2.3} at $x=\xi(t)$. The Sobolev Embedding Theorem (see lemma \ref{lema3.1}) implies that $u$ is uniformly bounded in $(0,T_m)$ by $\|u_0\|_1$. Since $x=\xi(t)$ is a point of minima of the function $u_x(\cdot,t)$, we conclude that $u_{xx}(\xi(t),t)=0$ and thus, $m(\xi(t),t)=u(\xi(t),t)$ is bounded as well. As a result, we conclude that both $g_{11}(\xi(t),t)$ and $g_{12}(\xi(t),t)$ are uniformly bounded for $t\in(0,T_m)$.

A different situation occurs with $g_{22}$. The previous arguments show that $g_{22}(\xi(t),t)=u_x(\xi(t),t)^2+B(u(\xi(t),t))$, where $B(u(\xi(t),t))$ encloses the uniformly bounded remaining terms of the metric in $(0,T_m)$.

For any sequence $(t_k)_{k}\subseteq(0,T_m)$ convergent to $T_m$, we have
$$
\sup_{x\in\R} g(x,t_k)\geq g_{22}(\xi(t_k),t_k)=u_x(\xi(t_k),t_k)^2+B(u(\xi(t_k),t_k))\rightarrow +\infty
$$
as $k\rightarrow\infty$, showing that
$$
\sup_{(x,t)\in\R\times[0,T_m)}g_{22}(x,t)=\lim_{t\rightarrow T_m}\inf_{\tau\geq t}\Big(\sup_{x\in\R}g_{22}(x,\tau)\Big)=+\infty.
$$

\subsection{Proof of theorem \ref{teo3.5}}\label{subsec5.4}

From \eqref{2.2.2} we have $f_{32}(x,t)=-u_{x}(x,t)$, and, as a result,
\bb\label{5.5.1}
\|f_{32}(\cdot,t)\|_\infty=\|u_x(\cdot,t)\|_\infty.
\ee

Therefrom, for each $t$ such that the solution exist, we have
\bb\label{5.5.2}
\int_0^t \|f_{32}(\cdot,\tau)\|_\infty\,d\tau=\int_0^t\|u_x(\cdot,\tau)\|_\infty\,d\tau.
\ee

By Theorem \ref{teo3.1} and the conditions on the initial datum, we conclude that the function defined in \eqref{5.5.2} is continuous. Let us prove the existence of a height $T_m<\infty$ such that $\|f_{32}(\cdot,t)\|_\infty\rightarrow\infty$ as $t\rightarrow T_m$.

The maximal height $T_m$ corresponds to the maximal time of existence of the solution. Following \cite[Corollary 1.1]{him-cmp} or \cite[Theorem 6.1]{bran-imrn}, the conditions on the initial datum in Theorem \ref{teo3.5} imply that the solution $u$ can only exist for a finite time $T_m$, implying on the existence of a maximal height $T_m$ for the strip in Theorem \ref{teo3.2}.

By \cite[Corollary 1.1, Eq. (1.20)]{him-cmp} we then have
$$
\int_0^{T_m}\|u_x(\cdot,\tau)\|_\infty\,d\tau=\infty.
$$

On the other hand, the singularities of the solution arise only in the form of wave breaking. Moreover, we have the equivalence (e.g, see \cite[page 525, Eq. (3.7)]{mol})
\bb\label{5.5.3}
\int_0^{T_m}\|u_x(\cdot,\tau)\|_\infty\,d\tau=\infty \Longleftrightarrow \int_0^{T_m}\|y(\tau)\|_\infty\,d\tau=\infty,
\ee
where $y(\cdot)$ is given by \eqref{5.4.1}. Let $(t_k)_{k\in\N}\subseteq(0,T_m)$ be any sequence convergent to $T_m$. By \eqref{5.5.3}, \eqref{5.5.2}, \eqref{5.4.1} and Lemma \ref{lema4.5}, we have
$y(t_k)=u(\xi(t_k),t_k)<\infty$ and 
$$
\int_0^{t_k} \|f_{32}(\cdot,\tau)\|_\infty\,d\tau<\infty,
$$
for any $k\in\N$, but
$$
\lim_{k\rightarrow\infty}\int_0^{t_k} \|f_{32}(\cdot,\tau)\|_\infty\,d\tau=\infty,
$$
meaning that $|f_{32}(x,t)|$ becomes unbounded near some point of the horizontal line $\R\times\{T_m\}$. Since $g_{22}(x,t)\geq f_{32}(x,t)^2$, we have $\sup_{x\in\R}g(x,t_k)\geq f_{32}(\xi(t_k),t_k)^2\rightarrow\infty$ 
as $k\rightarrow\infty$, and we then get again
\bb\label{5.5.4}
\sup_{(x,t)\in\R\times[0,T_m)}g_{22}(x,t)=\lim_{t\rightarrow T_m}\inf_{\tau\geq t}\Big(\sup_{x\in\R}g_{22}(x,\tau)\Big)=+\infty,
\ee
which proves the result.

We can give a slightly different proof starting from \eqref{5.5.3}. In fact, that condition implies on the wave breaking of the solution. According to McKean \cite{mk1,mk2}, this only happens if and only if the points for which $m_0(x)$ is positive lies to the left of those that $m_0(x)$ is negative, see also \cite[Theorem 1.1]{jiang}. In other words, for some $x_0\in\R$, we have $m_0(x_0)\geq0$, for $x\leq x_0$, whereas for $x\geq x_0$ we have $m_0(x_0)\leq0$. By \cite[Theorem 3.3]{freire-ch}, we get back to \eqref{5.5.4}.

\subsection{Proof of theorem \ref{teo3.6}}

By lemma \ref{lema4.2}, $u$ is a global solution in the class $C^0(H^{4}(\R),[0,\infty))\cap C^0(H^{3}(\R),[0,\infty))$. In particular, it is defined on ${\cal S}=\R\times(0,\infty)$ and, therefore, the coefficients $f_{ij}$, $1\leq i\leq 3$, $1\leq j\leq 2$, of the one-forms \eqref{2.2.2} belong to the class $C^{3,1}(\R\times(0,\infty))\subseteq C^{1}(\R\times(0,\infty))$, and then, $g_{kl}\in C^{1}(\R\times(0,\infty))$, $1\leq k,l\leq 2$.

By corollary \ref{cor3.1} we know that $\{\omega_1,\omega_2\}$ cannot be linearly independent everywhere. Let $R>0$, $\overline{B}_R(0);=\{(x,t)\in U;\,\,x^2+t^2\leq R^2\}$, and $W_R:=U\setminus \overline{B}_R(0)$.

Suppose that for some $R>0$ we had $u_x\big|_{W_R}=0$. Then $u\big|_{W_R}=c$, for some $c\in\R$, and since $u\in L^2(\R)$, we would conclude that $c=0$, resulting in $u\big|_{{\cal R}}=0$, for any open set ${\cal R}\subseteq W_R$. Therefore, we can find numbers $t_0>R$ and $b>a>R$ such that $[a,b]\times\{t_0\}\subseteq{\cal R}$, $u(x,t_0)=u_t(x,t_0)=0$, $a\leq x\leq b$. From \eqref{3.3.3} we obtain
$$
\p_x\Lambda^{-2}\Big(u^2+\f{u_x^2}{2}\Big)(x,t)=-\Big(u_t+uu_x\Big)(x,t).
$$

Evaluating at $t=t_0$ and letting $x\in(a,b)$, we conclude that
$$
F(x):=\p_x\Lambda^{-2}\Big(u^2+\f{u_x^2}{2}\Big)(x,t_0)=-\Big(u_t+uu_x\Big)(x,t_0)\equiv0,
$$
implying $F'(x)=0$, $x\in(a,b)$. Since $\p_x^2\Lambda^2=\Lambda^{-2}-1$, we get
$$0=F'(x)=\Lambda^{-2}\Big(u^2+\f{u_x^2}{2}\Big)(x,t_0)=\f{1}{2}\int_\R\f{e^{-|x-y|}}{2}\Big(u^2+\f{u_x^2}{2}\Big)(y,t_0)dy,\quad x\in(a,b),$$
wherefrom we arrive at the conclusion $u(x,t_0)\equiv0$, $x\in\R$. This would then imply $\|u\|_1=0$ at $t=t_0$. The invariance of $\|u\|_1$ implies $u\equiv0$, that conflicts with $u_0$ being a non-trivial initial datum.

The contradiction above forces us to conclude that, for any $R>0$, we can find $(x_R,t_R)\in W_R$ such that $u_x(x_R,t_R)\neq0$, meaning that we either have $u_x(x_R,t_R)>0$ or $(x_R,t_R)<0$. Since $u_x$ is continuous, we can find a neighbourhood $V_R$ of $(x_R,t_R)$ such that $u_x\big|_{V_R}$ has the same sign. 

We now observe that $m$ cannot be constant, see Example \ref{example3.4}. As a result, for some open set ${\cal R}\subseteq V_R$ we can have $m\neq \f{\lambda}{2}+\f{1}{2\lambda}$. Then the pullback of $\omega_1$ and $\omega_2$ with respect to $u$ and its derivatives on ${\cal R}$ satisfies the condition $\omega_1\wedge\omega_2\neq0$.

\section{Finite height vs finite time of existence}\label{sec6}

The results proved in \cite{freire-ch} and those in theorems \ref{teo3.4} and \ref{teo3.5} suggest that the metric blows up as long as the solution develops a wave breaking. This is, indeed, the case.

\begin{theorem}\label{teo6.1}
    Let $u\in C^0(H^4(\R),[0,T))\cap C^1(H^3(\R),[0,T))$ be a solution of the CH equation and $g_{22}$ be the corresponding component of the metric tensor given in \eqref{2.2.3}. Then $g_{22}$ blows up within a strip of finite height if and only if $u$ breaks in finite time.
\end{theorem}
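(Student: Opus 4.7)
The plan is to prove the two implications separately, exploiting the explicit form
\[
g_{22} = u_x^2 + \left(um + \f{\lambda}{2}u - \f{u}{2\lambda} - \f{1}{2} - \f{\lambda^2}{2}\right)^2
\]
of the metric coefficient together with the wave-breaking characterisation \eqref{2.1.4} and the conservation of ${\cal H}_1$.

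For the direction \emph{wave breaking} $\Rightarrow$ \emph{metric blow-up}, I would start from the assumption that $u$ breaks at some finite time $T_m$, which in the terminology of this manuscript means $\liminf_{t\to T_m} y(t)=-\infty$, where $y(t):=\inf_{x\in\R} u_x(x,t)$. Pick a sequence $t_k\to T_m$ realising $y(t_k)\to-\infty$ and, for each $k$, use Lemma \ref{lema4.5} to select $\xi(t_k)\in\R$ with $y(t_k)=u_x(\xi(t_k),t_k)$. Since $u_x(\cdot,t_k)\in H^3(\R)\subset C^2(\R)$ vanishes at infinity and $y(t_k)<0$ for large $k$, the point $\xi(t_k)$ realises an interior global minimum of $u_x(\cdot,t_k)$, so $u_{xx}(\xi(t_k),t_k)=0$ and therefore $m(\xi(t_k),t_k)=u(\xi(t_k),t_k)$. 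Invariance of ${\cal H}_1$ combined with Lemma \ref{lema3.1} (with $s=1$) delivers a uniform bound on $\|u(\cdot,t)\|_\infty$ over $[0,T_m)$, so the bracketed factor of $g_{22}$ at $(\xi(t_k),t_k)$ stays bounded while $u_x(\xi(t_k),t_k)^2=y(t_k)^2\to +\infty$. Hence $\sup_{x\in\R} g_{22}(x,t_k)\to +\infty$, producing the desired blow-up inside a strip of height $T_m$.

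For the converse, I would argue by contradiction: suppose $g_{22}$ blows up within a strip of finite height $T_m$ while $u$ does \emph{not} break in finite time. Then the standard blow-up criterion for the CH equation, already invoked in the proof of Theorem \ref{teo3.5}, forces $u$ to exist globally in the class $C^0(H^4(\R),[0,\infty))\cap C^1(H^3(\R),[0,\infty))$. Continuity of $t\mapsto \|u(\cdot,t)\|_{H^4}$ on the compact interval $[0,T_m]$ then produces a bound $M<\infty$, and Lemma \ref{lema3.1} transfers this to uniform bounds on $\|u\|_\infty$, $\|u_x\|_\infty$ and $\|u_{xx}\|_\infty$ over $\R\times[0,T_m]$. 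Because $g_{22}$ is a polynomial expression in $u$, $u_x$ and $m=u-u_{xx}$, it would be uniformly bounded on $\R\times[0,T_m]$, contradicting the assumed blow-up.

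The main obstacle is the forward implication, since one must prevent the bracketed factor in $g_{22}$ from diverging in a way that could conceal the divergence of $u_x^2$ after taking the supremum. I resolve this by probing $g_{22}$ precisely at the minimiser $\xi(t_k)$ of $u_x(\cdot,t_k)$, where the vanishing of $u_{xx}$ collapses $m$ to the bounded quantity $u$ and cleanly isolates the blow-up in the $u_x^2$ term.
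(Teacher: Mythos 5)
Your proof is correct, but it follows a different route from the paper's. For the forward implication you evaluate $g_{22}$ at the spatial minimiser $\xi(t_k)$ of $u_x(\cdot,t_k)$ supplied by Lemma \ref{lema4.5}, where $u_{xx}=0$ collapses $m$ to the $\mathcal{H}_1$-bounded quantity $u$; this is essentially the argument the paper uses to prove Theorem \ref{teo3.4}, transplanted here. For the converse you argue by contradiction from global existence, continuity of $t\mapsto\|u(\cdot,t)\|_{H^4}$ on the compact interval $[0,T_m]$, and the Sobolev embedding. The paper instead proves both directions at once through a single two-sided pointwise estimate: it integrates the momentum equation along the characteristics of Lemma \ref{lema4.3} to get $m(\varphi(x,t))=m_0(x)\exp\bigl(-2\int_0^t u_x(\varphi(x,\tau))\,d\tau\bigr)$, sandwiches $\sqrt{g}$ between $|u_x|$ and $|u_x|+\sqrt{2\mathcal{H}_1(0)}\,\|m_0\|_{L^\infty}\exp\bigl(2\int_0^t\|u_x(\cdot,\tau)\|_{L^\infty}d\tau\bigr)$, and invokes the $L^1_t L^\infty_x$ blow-up criterion \eqref{5.5.3}. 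The paper's estimate is quantitative (it directly compares the size of the metric with $|u_x|$ and identifies the maximal height of the strip with the lifespan), whereas your argument is more elementary: the converse needs nothing beyond the persistence of the $H^4$ norm, and you avoid the characteristics and the momentum representation entirely. Both are valid; just make sure in the forward direction that you state explicitly that wave breaking forces the maximal existence time $T_m$ to be finite (so that the blow-up indeed occurs within a strip of \emph{finite} height), and in the converse that you are using the standard equivalence between finite lifespan and wave breaking for the CH equation, which is the content of \cite[Theorem 4.2]{const1998-2} and of \eqref{5.5.3}.
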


\begin{proof}
Let $q$ be the function given in Lemma \ref{lema4.3} and $\varphi(x,t)=(q(x,t),t)$ be the bijection given in the proof of Theorem \ref{teo3.3} (see subsection \ref{subsec5.3}). As long as the solution exists for $t>0$ and taking \eqref{2.0.1} into account, we have
$$
\f{d}{dt}m(\varphi(x,t))=(m_t+um_x)(\varphi(x,t))=-2(u_xm)(\varphi(x,t)),
$$
that is,
\bb\label{6.0.1}
m(\varphi(x,t))=m_0(x)e^{\ds{-2\int_0^t u_x(\varphi(x,\tau))d\tau}}.
\ee

Since $u\in C^0(H^4(\R),[0,T))\cap C^1(H^3(\R),[0,T))$ and $m_0=u_0-u_0''$, Lemma \ref{lema3.1} implies $\|m_0\|_{L^\infty}<\infty$. Moreover, we have $\|u(\cdot,t)\|_{L^\infty}<\|u(\cdot,t)\|_{H^1}=\|u_0(\cdot)\|_{H^1}.$
As a result, from \eqref{2.2.3} we have continuous functions $\kappa(u,\lambda)$ and $B(u,\lambda)$ such that
$$
u_x(x,t)^2\leq g(x,t)\leq u_x(x,t)^2+\kappa(u,\lambda)(m^2+m)+B(u,\lambda).$$

Let $\kappa_1=\sup\limits_{|s|\leq \|u_0(\cdot)\|_{H^1}} \kappa(s,\lambda)$ and $\kappa_2=\sup\limits_{|s|\leq \|u_0(\cdot)\|_{H^1}}B(s,\lambda)$. From the inequality above we arrive at

\bb\label{6.0.3}
u_x(x,t)^2\leq g(x,t)\leq
u_x(x,t)^2+\kappa_1(m^2+m)+\kappa_2,
\ee
that combined with \eqref{5.5.3} and \eqref{6.0.1} show that $g_{22}$ blows up in a strip of finite height if and only if $u_x$ blows up in finite time. Hence, we have 
$$
\sup_{(x,t)\in\R\times[0,T)}g_{22}(x,t)=\infty \Longleftrightarrow \liminf_{t\rightarrow T}\big(\inf_{x\in\R} u_x(x,t)\big)=-\infty.
$$

In particular, the maximal height of the strip coincides with the maximal time of existence of the solutions. 
\end{proof}

\section{Examples}\label{sec7}

We give two examples illustrating qualitative aspects of the surfaces determined by solutions of the CH equation once an initial datum is known.

\begin{example}\label{example7.1}
Let us consider $m_0(x)=e^{-x^{2}}$. As a consequence of \eqref{5.4.2}, $m(x,t)>0$ and so does the corresponding solution $u$. As a result of theorem \ref{teo3.1} and its corollaries, $u$ is a generic solution of the CH equation in the sense of definition \ref{def3.4}.

By theorem \ref{teo3.6}, the one-forms \eqref{2.2.2} are defined on ${\cal S}=\R\times(0,\infty)$ and they endow an infinite number of simply connected open sets $\Omega\subseteq U$ with the structure of a PSS.

\end{example}

\begin{example}\label{example7.2}
Let us now consider the family of functions $\phi_n(x)=e^{-nx^2}$, $n\in\N$ and $x\in\R$. As pointed out in \cite[Example 4.3]{const1998-2}, for $n$ sufficiently large, we have
\bb\label{7.0.1}
\phi_n'(x_0)<-\f{\|\phi_n\|_1}{\sqrt{2}},
\ee
for some $x_0\in\R$.

Fix $n$ large enough so that \eqref{7.0.1} holds and choose $u_0=\phi_n$. As a consequence of theorem \ref{teo3.4}, we know that $g_{22}$ blows up for some $x\in\R$ as long as $t$ approaches some value $T_m$ determined by the initial datum.
\end{example}

We close this section with some words about the maximal time $T_m$ of existence (lifespan) of a solution of the CH equation emanating from an initial datum in Sobolev spaces. From theorem \ref{teo3.2} we know that $u$, and the metric as well, will become unbounded before reaching a certain value determined by the initial datum. The question is: do we have any sort of information about how it is determined? An answer for this question is provided by \cite[Theorem 0.1]{dan}, see also \cite[Eq. (4.2)]{mol}, which shows a lower bound for it:
$$
T_m\geq T(u_0):=-\f{2}{\|u_0\|_1}\arctan\Big(\f{\|u_0\|_1}{\inf_{x\in\R}u_0'(x)}\Big).
$$

For the initial datum $u_0(x)=e^{-nx^2}$ considered in example \ref{example7.2}, we have
$$
T(u_0)=2\sqrt[4]{\f{2n}{\pi(n+1)^2}}\arctan\Big(\sqrt[4]{\f{\pi e^2(n+1)^2}{8n^3}}\Big).
$$

In particular, for $n\gg1$, we have
$$
T(u_0)=\sqrt{\f{2e}{n}}+O(n^{-1}).
$$

As a consequence of the quantities shown above, for the given initial datum in example \ref{example7.2} we can surely guarantee that only certain open, properly and simply connected sets contained in 
$$
{\cal S}=\R\times\Big(0,2\sqrt[4]{\f{2n}{\pi(n+1)^2}}\arctan\Big(\sqrt[4]{\f{\pi e^2(n+1)^2}{8n^3}}\Big)\Big)
$$
can be endowed with a PSS structure. 

\section{Discussion}\label{sec8}

The connection between surfaces of constant Gaussian curvature ${\cal K}=-1$ has a long history in differential geometry, dating back to the first half part of century XIX \cite[page 17]{rogers}, see also \cite[chapter 9]{cle} and \cite[chapter 1]{keti-book}. 

Roughly half a century ago, a hot topic in mathematical physics emerged after certain hydrodynamics models, more precisely, the KdV equation, was shown to have remarkable properties \cite{gardner}. In \cite{miura} there is a survey of results about the KdV equation and its importance for nourishing a new-born field whose most well known representative is just itself.

An explosion of works was seen during the 60 and 70's after \cite{gardner} exploring properties of the KdV equation, while other quite special equations were also discovered sharing certain properties with the KdV. In this context was proposed the AKNS method \cite{akns}, which reinvigorated and boosted the field emerged after the KdV, currently called {\it integrable equations} (very roughly and naively speaking, an equation sharing properties with the KdV equation). By that time, the interest on this sort of equations spred out fields, attracting people more inclined to analysis of PDEs and geometric aspects of these equations.

By the end of the 70's, \cite{sasaki} Sasaki showed an interesting connection between equations described by the AKNS method \cite{akns} and surfaces of Gaussian curvature ${\cal K}=-1$, culminating in the seminal work by Chern and Tenenblat \cite[section 1]{chern} who established the basis for what today is known as {\it PSS equations}. These works are roots for what Reyes called {\it geometric integrability}, see \cite{reyes2000,reyes2006-sel,reyes2006-jde, reyes2011}.

Equation \eqref{1.0.1} was discovered in \cite{fokas}, but became famous after its derivation as a hydrodynamic model in a paper by Camassa and Holm \cite{chprl}, and named after them, see also the review \cite{freire-cm}. Despite its physical relevance, like other integrable models physically relevant, it attracted the interests of different areas. Probably one of the most impacted was just analysis of PDEs. In particular, the works by Constantin and co-workers \cite{bressan,const1998-1,const1998-2,const2000-1,const2002-jpa} payed a crucial role, creating and developing new tools for tackling the CH equation that would later be explored not only to the CH equation itself, but also for other similar models, see \cite{dan,freire-jpa,freire-cor,freire-dpde,henry-jnmp,him-cmp,linares} to name a few. Most of these works, not to say all, deal with solutions of the CH equation with finite regularity.

Apparently, Constantin \cite{const2000-1} was the first who showed connections between the CH equation and the geometry of manifolds. However, it was not before the fundamental work by Reyes \cite{reyes2002} that it was recognised as a PSS equation. Even though these two works are concerned with the same object (the CH equation), they are completely different in nature. In fact, the results reported by Constantin \cite{const2000-1} are intrinsically related to Cauchy problems involving the CH equation, whereas those shown by Reyes are concerned with structural aspects of the equation itself, such as integrability and abstract two-dimensional surfaces.

The work by Reyes was followed by a number of works dealing with geometric aspects of CH type equations {\it à la} Chern and Tenenblat, see \cite{keti2015,tarcisio, raspajde,raspasapm,freire-tito-sam,nazime,reyes2006-sel,reyes2006-jde, reyes2011} and references therein. 

Despite the tremendous research carried out since the works \cite{bressan,const1998-1,const1998-2,const2000-1,blanco} and \cite{reyes2000,reyes2006-sel}, it is surprising that until now very few attention has been directed to geometric aspects of well-posed solutions and PSS equations. As far as I know, the first paper trying to make such a connection is \cite{nazime}, where qualitative analysis of a certain PSS equation was used to describe aspects of the corresponding metric. However, even this reference considered an analytic solution. A second attempt addressing this topic is \cite{nilay}, where Cauchy problems involving the equation considered in \cite{freire-tito-sam,nazime} were studied. In spite of the efforts made in \cite{nilay,nazime}, these works do not deal with solutions blowing up, which was first considered in \cite{freire-ch}.

In \cite{freire-ch} the notions of $C^k-$PSS and generic solutions were first considered and the blow up of metrics determined by the solutions of the CH equation were shown for two situations, depending on how the sign of the momentum behaves, see \cite[theorems 3.2 and 3.4]{freire-ch}. However, no problems related to global nature, i.e, circumstances in which the co-frame can be defined on $\R\times(0,\infty)$ or asymptotic behaviors of metrics, were considered.

The notions of generic solutions and PSS equations used in the current literature carry intrinsically $C^\infty$ regularity and this brings issues in the study of surfaces in connection with Cauchy problems. This is even more dramatic for equations like the CH because they have different representations depending on the sort of solutions one considers and they only coincide on certain Banach spaces. This explains why in \cite{freire-ch} it was needed to step forward and introduce definitions \ref{def3.4} and \ref{def3.5}.

Another important aspect of the connections between geometry and analysis made in the present paper is the condition $\omega_1\wedge\omega_2\neq0$. Whenever $\omega_1\wedge\omega_2=0$ we have \eqref{3.3.9} holding on an open set $\Omega$. This is a problem of unique continuation of solutions, whose answer would be impossible quite few time ago. 

For $c=0$, the answer for arbitrary open sets was given very recently in \cite{linares}, see also \cite{freire-jpa,freire-cor,freire-dpde}. As long as $u\big|_\Omega=0$, for some open set $\Omega$, then $u\equiv0$, see \cite[Theorem 1.3]{linares}. Our solutions emanate from a non-trivial initial datum, and then, we cannot have $u\equiv0$ on an open set $\Omega$ contained in the domain of $u$. For $c\neq0$, it is unclear if we might have $u\big|_\Omega=c$ since this unique continuation problem is still an open question, see \cite[Discussion]{freire-ch}.

The proof of Corollary \ref{cor3.1} shows that $u_x(x,t)$ vanishes at least once, for each $t$ as long as $u$ is defined, see also \cite[Theorem 2.3]{freire-ch}. As a result, the domain of $u$ cannot be wholly endowed with a PSS structure. The answer for the open question mentioned above would only clarify whether we may have open sets that cannot be endowed with a PSS structure (those in which $u$ is constant). If its answer is that $c=0$ (which I conjecture, it is the case), then Corollary \ref{cor3.1} would imply that the domain of the solution has a non-countable set of points in which we loss the structure of a PSS equation, but such a set would be of zero measure. On the other hand, if the answer to the question is that we may have $c\neq0$, then we would have a situation whose geometric implication should be better understood, but would surely imply on the existence of subsets of the domain of $u$, with positive measure, in which a PSS structure is not allowed.

Event though the ideas developed and better explored in this paper are mostly concerned with the CH equation, they can used to other PSS equations. The main point is that the techniques to deal with Cauchy problems may vary depending on the equation, and this will then impact in how to address the geometric problem. This can be seen by comparing the results established in the present paper and those in \cite{nilay,nazime}. 
Recently, the present ideas have been applied to the Degasperis-Procesi, where connections between the Cauchy problems involving that equation and PSS are investigated, see \cite{freire-dp}.

\section{Conclusion}\label{sec9}

In this paper we studied the influence of Cauchy problems and the PSS surfaces defined by the corresponding solutions. To this end, we had to propose a formulation of the notion of PSS equation and generic solutions. Our main results are reported in subsection \ref{sub2.3}, including the new already mentioned definitions. A remarkable fact reported is that any non-trivial initial datum gives rise to a PSS equation. In particular, we showed solutions breaking in finite time lead to metrics having blow up either.

\section*{Acknowledgements}

Most of this work was developed and written while I was visiting the Department of Mathematical Sciences of the Loughborough University, which I am extremely  thankful for the warm hospitality and amazing work atmosphere. I am grateful to Jenya Ferapontov, Keti Tenenblat, Artur Sergyeyev and Sasha Veselov for stimulating discussions, as well as many suggestions given. I am also indebted to Priscila Leal da Silva for her firm encouragement, support, suggestions and patience to read the first draft of this manuscript.

Last but not least, I want to thank CNPq (grant nº 310074/2021-5) and FAPESP (grants nº 2020/02055-0 and 2022/00163-6) for financial support.

\end{document}